\newtheorem{thm}{Theorem}[section]
\newtheorem{cor}[thm]{Corollary}
\newtheorem{lem}[thm]{Lemma}
\newtheorem{prop}[thm]{Proposition}
\newtheorem{defn}[thm]{Definition}
\newtheorem*{thm*}{Theorem}
\newcommand{\U}{\mathcal{U}}
\newcommand{\N}{\mathbb{N}}
\newcommand{\Z}{\mathbb{Z}}
\newcommand{\V}{\mathcal{V}}
\newcommand{\bN}{\beta\mathbb{N}}
\begin{document}

\title{A nonstandard technique in combinatorial number theory}
\author{Lorenzo Luperi Baglini\corref{cor1}}
\ead{lorenzo.luperi.baglini@univie.ac.at}
\cortext[cor1]{The author has been supported by grant P25311-N25 of the Austrian Science Fund FWF.}
\address{University of Vienna, Faculty of Mathematics, Oskar-Morgenstern-Platz 1, 1090 Vienna, Austria.}

\begin{abstract}

In \cite{rif84}, \cite{Tesi} it has been introduced a technique, based on nonstandard analysis, to study some problems in combinatorial number theory. In this paper we present three applications of this technique: the first one is a new proof of a known result regarding the algebra of $\bN$, namely that the center of the semigroup $(\bN,\oplus)$ is $\N$; the second one is a generalization of a theorem of Bergelson and Hindman on arithmetic progressions of lenght three; the third one regards the partition regular polynomials in $\mathbb{Z}[X]$, namely the polynomials in $\mathbb{Z}[X]$ that have a monochromatic solution for every finite coloration of $\N$. We will study this last application in more detail: we will prove some algebraical properties of the sets of such polynomials and we will present a few examples of nonlinear partition regular polynomials.\\
In the first part of the paper we will recall the main results of the nonstandard technique that we want to use, which is based on a characterization of ultrafilters by means of nonstandard analysis.

\end{abstract}

\begin{keyword}
Nonstandard analysis \sep ultrafilters \sep combinatorial number theory \sep nonlinear polynomials
\end{keyword}
\maketitle
\section{Introduction}

Many problems in combinatorial number theory are related with the study of "partition regular families", that are defined as follows:

\begin{defn}[] Let $\mathcal{F}$ be a family, closed under superset, of nonempty subsets of a set $S$. $\mathcal{F}$ is {\bfseries partition regular} if, whenever $S=A_{1}\cup...\cup A_{n}$, there exists an index $i\leq n$ such that $A_{i}\in \mathcal{F}$.
\end{defn}

The partition regular families on $S$ are closely related to the ultrafilters on $S$:

\begin{thm}\label{ultra} Let $S$ be a set and $\mathcal{F}$ a family, closed under supersets, of nonempty subsets of $S$. Then $\mathcal{F}$ is partition regular if and only if there exists an ultrafilter $\U$ on $S$ such that $\U\subseteq\mathcal{F}$.
\end{thm}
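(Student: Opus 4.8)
I would prove the two implications separately. The \emph{if} direction is the routine one: assume there is an ultrafilter $\U$ on $S$ with $\U\subseteq\mathcal{F}$, and let $S=A_{1}\cup\dots\cup A_{n}$ be a finite cover. Since $S\in\U$ and an ultrafilter is prime with respect to finite unions (if $X\cup Y\in\U$ then $X\in\U$ or $Y\in\U$, since otherwise $S\setminus X$ and $S\setminus Y$, hence their intersection $S\setminus(X\cup Y)$, would all lie in $\U$, forcing $\emptyset\in\U$), an easy induction yields $A_{i}\in\U$ for some $i\leq n$; as $\U\subseteq\mathcal{F}$ we get $A_{i}\in\mathcal{F}$, so $\mathcal{F}$ is partition regular. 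Notice that closure under supersets is not even needed here.

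The substance is the \emph{only if} direction. The plan is to reformulate the condition ``$\U\subseteq\mathcal{F}$'' in a way that turns partition regularity into a finite intersection property, and then to invoke the ultrafilter lemma. Writing $\mathcal{I}=\mathcal{P}(S)\setminus\mathcal{F}$ for the family of non-members, I observe that for an ultrafilter $\U$ one has $\U\subseteq\mathcal{F}$ if and only if $\U\cap\mathcal{I}=\emptyset$, and, since $\U$ contains exactly one of $D$ and $S\setminus D$ for every $D\subseteq S$, this happens precisely when $S\setminus D\in\U$ for all $D\in\mathcal{I}$. Hence it suffices to produce an ultrafilter containing the family
\[
\mathcal{B}=\{\,S\setminus D \;:\; D\subseteq S,\ D\notin\mathcal{F}\,\}.
\]

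To obtain such an ultrafilter I would check that $\mathcal{B}$ has the finite intersection property and then extend it to an ultrafilter $\U\supseteq\mathcal{B}$ by the ultrafilter lemma; by the previous paragraph any such $\U$ satisfies $\U\subseteq\mathcal{F}$. The verification of the finite intersection property is exactly where partition regularity enters, and I expect it to be the crux of the argument, although once the reformulation is in place it is short: for $D_{1},\dots,D_{n}\notin\mathcal{F}$ one has $\bigcap_{i=1}^{n}(S\setminus D_{i})=S\setminus\bigcup_{i=1}^{n}D_{i}$, which is nonempty if and only if $\bigcup_{i=1}^{n}D_{i}\neq S$. A failure of the finite intersection property would thus yield a finite cover $S=D_{1}\cup\dots\cup D_{n}$ with every piece outside $\mathcal{F}$, contradicting partition regularity; conversely partition regularity forbids exactly such covers. (The case $n=1$ also shows that each member $S\setminus D$ of $\mathcal{B}$ is nonempty, using that $S\in\mathcal{F}$, which holds because $\mathcal{F}$ is nonempty and closed under supersets.) The main obstacle is therefore conceptual rather than computational: recognizing that ``every finite cover of $S$ meets $\mathcal{F}$'' is the De Morgan dual of ``the complements of the non-members are centered'', after which the ultrafilter lemma completes the proof.
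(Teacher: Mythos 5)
Your proof is correct. The paper does not argue this theorem itself --- it simply cites Theorem 3.11 of Hindman and Strauss --- so there is no in-paper proof to compare against; your argument (primeness of ultrafilters under finite unions for the easy direction, and the De Morgan reformulation of partition regularity as the finite intersection property of $\{S\setminus D : D\notin\mathcal{F}\}$ followed by the ultrafilter lemma for the converse) is precisely the standard proof given in that reference, and all the details, including the observation that $S\in\mathcal{F}$, check out.
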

\begin{proof} This is Theorem 3.11 in \cite{rif12}. \end{proof}

So ultrafilters are an important tool to study many problems in combinatorial number theory. In some recent works it has been introduced and used a technique to study ultrafilters on the set of natural numbers\footnote{In this paper, to simplify some definitions and some results, we assume that $\N=\{1,2,3,...\}$.} by means of Nonstandard Analysis. The basic idea of this technique is that, given a nonstandard extension $^{*}\N$ of $\N$ (with a particular technical condition that will be introduced later), every ultrafilter $\U$ can be identified with a subset $G_{\U}$ of $^{*}\N$. Following an approach that has some ideas in common with the one used by Christian W. Puritz in his articles \cite{rif15}, \cite{rif16}, the one used by Joram Hirschfeld in \cite{rif14} and the one used by Greg Cherlin and Joram Hirschfeld in \cite{rif99}, it can be shown that many combinatorial properties of $\U$ can be deduced by algebraical properties of $G_{\U}$ which, in some case, are easier to prove. E.g., in \cite{rif84} this technique has been used to prove a "qualitative" property of Rado's Theorem, while in \cite{Mono} it has been used to study some results in combinatorial number theory that regard nonlinear polynomials in $\mathbb{Z}[X]$.\\
In this paper we present three new applications of this technique.\\
The first application regards the algebra of $(\bN,\oplus)$, namely the Stone-\v{C}ech compactification of the semigroup $(\N,+)$: we will give a direct proof that the center of the semigroup $(\bN,\oplus)$ is $\N$.\\
The second applications regards arithmetic progressions of lenght three. In \cite{rif41} Bergelson and Hindman proved a "qualitative" result on such arithmetic progressions, namely that, for every idempotent ultrafilter $\U$ and for every set $A\in 2\U\oplus\U$, $A$ contains arithmetic progressions of lenght three. We will reprove this fact, and we will prove that a similar "qualitative" property holds for the ultrafilter $n_{1}\U\oplus... n_{k}\U$, where $k,n_{1},...,n_{k}$ are any natural numbers with $n_{i+1}\neq n_{i}$ for every $i=1,...,k-1$.\\
The third application regards the partition regular polynomials in $\mathbb{Z}[X]$:

\begin{defn} A polynomial $P(x_{1},...,x_{n})\in\mathbb{Z}[X]$ is {\bfseries partition regular} (on $\N$) if, for every finite coloration of $\N$, the equation $P(x_{1},...,x_{n})=0$ has a monochromatic solution.\end{defn}

The linear polynomials have been studied by Richard Rado in \cite{rif17}:

\begin{thm}[Rado]\label{Rado}Let $P(x_{1},...,x_{n})= \sum_{i=1}^{n}a_{i}x_{i}$ be a linear polynomial in $\mathbb{Z}[X]$ with nonzero coefficients. The following conditions are equivalent:
\begin{enumerate}
	\item $P(x_{1},...,x_{n})$ is partition regular on $\N$;
	\item there is a nonempy subset $J$ of $\{1,...,n\}$ such that $\sum\limits_{j\in J}a_{j}=0$.
\end{enumerate}
\end{thm}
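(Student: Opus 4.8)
For (1) $\Rightarrow$ (2) I would argue by contraposition: assuming that no nonempty $J\subseteq\{1,\dots,n\}$ satisfies $\sum_{j\in J}a_j=0$, I would exhibit a finite colouring of $\N$ admitting no monochromatic solution. The colouring I would use is the $p$-adic one: fix a prime $p>\sum_{i=1}^n|a_i|$ and colour each $x\in\N$ by its last nonzero digit in base $p$, i.e. by $(x/p^{v_p(x)})\bmod p\in\{1,\dots,p-1\}$, where $v_p$ denotes the $p$-adic valuation. Given a hypothetical monochromatic solution $x_1,\dots,x_n$ of colour $c$, I would set $k=\min_i v_p(x_i)$ and $I=\{i:v_p(x_i)=k\}\neq\emptyset$, reduce the equation $\sum_i a_i x_i=0$ modulo $p^{k+1}$, and divide through by $p^k$. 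This leaves $c\sum_{i\in I}a_i\equiv 0\pmod p$; since $\gcd(c,p)=1$ and $|\sum_{i\in I}a_i|\le\sum_i|a_i|<p$, it forces $\sum_{i\in I}a_i=0$, contradicting the assumption. Hence $P$ is not partition regular, which is the contrapositive of (1) $\Rightarrow$ (2).

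For (2) $\Rightarrow$ (1) I would show that a zero-sum subset of the coefficients guarantees a monochromatic solution under every finite colouring, and this is the Ramsey-theoretic heart of the statement. The naive attempt of placing all variables inside a single long monochromatic arithmetic progression furnished by van der Waerden's theorem runs into a genuine obstruction: writing $x_i=u+v\,d_i$ yields $\sum_i a_i x_i=u\sum_i a_i+v\sum_i a_i d_i$, and the coefficient $\sum_i a_i$ of $u$ cannot be cancelled when the full set of coefficients does not sum to zero, while $u$ and $v$ are not ours to choose. This is precisely where the difficulty lies, and it is why a single progression is insufficient.

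The plan is therefore to pass to a richer monochromatic structure. I would produce, for every finite colouring of $\N$, a monochromatic $(m,p,c)$-set in the sense of Deuber, with parameters taken large relative to the $a_i$; its existence follows from an iterated application of van der Waerden's theorem. I would then verify that the condition $\sum_{j\in J}a_j=0$ lets one read off a solution of $\sum_i a_i x_i=0$ whose entries all lie in such a set, hence are monochromatic. Alternatively, and more in keeping with the methods of the present paper, I would recast the family of sets containing a solution and apply Theorem \ref{ultra}, exhibiting a suitable idempotent-type ultrafilter in $(\bN,\oplus)$ every member of which contains a solution. In either formulation the main obstacle is the same: securing the additive control over the variables that the single arithmetic progression fails to supply, which the zero-sum hypothesis on $J$ is exactly what makes possible.
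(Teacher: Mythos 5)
The paper does not actually prove this statement: it is quoted as Rado's theorem with a citation to \cite{rif17}, so there is no internal proof to compare against. Judged on its own terms, your argument for $(1)\Rightarrow(2)$ is complete and correct: the colouring by the last nonzero digit in base $p$ for a prime $p>\sum_i|a_i|$, together with the reduction modulo $p^{k+1}$ at the minimal valuation level, is exactly the standard obstruction, and every step (the nonemptiness of $I$, the invertibility of $c$ modulo $p$, the size bound forcing $\sum_{i\in I}a_i=0$) checks out.

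The direction $(2)\Rightarrow(1)$, however, is a plan rather than a proof, and the part you defer is precisely the mathematical content of the theorem. You correctly diagnose why a single van der Waerden progression is insufficient, and you name the right remedies (Deuber $(m,p,c)$-sets, or equivalently a strengthened van der Waerden theorem producing a monochromatic progression together with a monochromatic multiple of its common difference; or the idempotent-ultrafilter route of \cite{rif84}). But the sentence ``I would then verify that the condition $\sum_{j\in J}a_j=0$ lets one read off a solution'' hides the entire argument. Concretely: since the $a_i$ are nonzero, $|J|\geq 2$; setting $x_i=y$ for $i\notin J$ and $x_i=x+\lambda_i d$ for $i\in J$, the zero-sum hypothesis reduces the equation to $\bigl(\sum_{i\in J}a_i\lambda_i\bigr)d+sy=0$ with $s=\sum_{i\notin J}a_i$, and one must exhibit, in every colour class, a configuration $\{x+\lambda d: 0\leq\lambda\leq L\}\cup\{\mu d\}$ in which $\mu$ has the right divisibility for the $\lambda_i$ to be solvable (the values $\sum_{i\in J}a_i\lambda_i$ only range over a subgroup determined by a gcd, so $\mu$ cannot be arbitrary). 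Producing that configuration is where the iterated van der Waerden/colour-focusing argument lives, and none of it is carried out. You should also dispose of the degenerate case $J=\{1,\dots,n\}$ separately, where $x_1=\dots=x_n$ is already a monochromatic solution. As it stands, the forward implication is proved and the converse is only announced.
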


Very little is known in the case of nonlinear polynomials. We will study a few properties of the partition regular (linear and nonlinear) polynomials, and we will present a particular case of a result proved in \cite{Mono}, namely that the nonlinear polynomial

\begin{center} $P(x,y,z,w)=x+y-zw$ \end{center}

is partition regular.

\section{The Nonstandard Approach}

In this section we recall the basic ideas and the basic facts regarding the nonstandard technique that we want to use in the following. A more detailed presentation can be found in \cite{rif84}, \cite{Tesi} and \cite{Mono}.\\
We assume the knowledge of the nonstandard notions and tools that we use, in particular the knowledge of superstructures, star map and enlarging properties (see, e.g., \cite{rif5}), as well as the knowledge of the basic facts and definitions regarding the Stone-\v{C}ech compactification $\beta S$ of a semigroup $(S,+)$ (we notice that we will be interested only in the cases $(\N,+)$, $(\N,\cdot)$ and $(\mathbb{Q},+)$). We suggest \cite{rif12} as a general reference about ultrafilters, \cite{rif1} and \cite{rif19} as introductions to nonstandard methods and \cite{rif5} as a reference for the model theoretic notions that we use.\\
The first notion that we recall is the following:
\begin{defn} A {\bfseries superstructure model of nonstandard methods} is a triple $\langle \mathbb{V}(X), \mathbb{V}(Y), *\rangle$ where 
\begin{enumerate}
	\item a copy of $\N$ is included in $X$ and in $Y$;
	\item $\mathbb{V}(X)$ and $\mathbb{V}(Y)$ are superstructures on the infinite sets $X$, $Y$ respectively;
	\item $*$ is a proper star map from $\mathbb{V}(X)$ to $\mathbb{V}(Y)$ that satisfies the transfer property.
\end{enumerate}
\end{defn}

We will be interested in single superstructure models of nonstandard methods, i.e. models where $\mathbb{V}(X)=\mathbb{V}(Y)$. These models (which actually exists, see \cite{rif2}) have a nice property that will be of great importance in the following: they allow to iterate the star map. In particular, for any object $\alpha\in\mathbb{V}(X)$, the element $^{*}\alpha$ is well defined and, due to the transfer property, it has "a lot of properties in common with $\alpha$" even if, in general, $\alpha$ and $^{*}\alpha$ will be different. For example, $^{**}\N$ is a well defined object in $\mathbb{V}(X)$, and it is a proper nonstandard extension of $^{*}\N$ (in the same way in which $^{*}\N$ is a nonstandard extension of $\N$). More interestingly, if $\alpha\in$$^{*}\N\setminus\N$, then $^{*}\alpha$ is a well defined object in $^{**}\N\setminus$$^{*}\N$. In this case, $\alpha<$$^{*}\alpha$ (note that this property does not hold if $\alpha=n\in\N$ since, as usual in Nonstandard Analysis, we assume that $^{*}n=n$ for every $n\in\N$). Nevertheless, $\alpha$ and $^{*}\alpha$ shares a lot of properties: e.g., $\alpha$ is prime/odd/a square if and only if $^{*}\alpha$ is prime/odd/a square.\\
The starting point for the construction of our technique is a result that associates elements of $\bN$ and subsets of any "enlarged enough" nonstandard extension of $\N$, which can be obtained by mean of the following theorem:

\begin{thm} $(1)$ Let $^{*}\N$ be a hyperextension of $\N$. For every hypernatural number $\alpha$ in $^{*}\N$, the set 
\begin{center} $\mathfrak{U}_{\alpha}=\{A\in\N\mid \alpha\in$$^{*}A\}$ \end{center} is an ultrafilter on $\N$.\\
$(2)$ Let $^{*}\N$ be a hyperextension of $\N$ with the $\mathfrak{c}^{+}$-enlarging property\footnote{We recall that a nonstandard extension $^{*}S$ of a set $S$ has the $\mathfrak{c}^{+}$-enlarging property if, for every family $\mathfrak{F}$ of subsets of $S$ with $|\mathfrak{F}|\leq \mathfrak{c}$, we have $\bigcap_{A\in\mathfrak{F}}$$^{*}A\neq\emptyset$.}. For every ultrafilter $\U$ on $\N$ there exists an element $\alpha$ in $^{*}\N$ such that $\U=\mathfrak{U}_{\alpha}$.\end{thm}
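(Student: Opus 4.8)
The plan is to treat the two parts separately. For part $(1)$, I would verify directly that $\mathfrak{U}_{\alpha}$ satisfies the defining axioms of an ultrafilter on $\N$, in each case translating the set-theoretic condition into a statement about $^{*}$-images and invoking the transfer property. Since $^{*}\emptyset=\emptyset$ we have $\alpha\notin{}^{*}\emptyset$, so $\emptyset\notin\mathfrak{U}_{\alpha}$; since $\alpha\in{}^{*}\N$ we have $\N\in\mathfrak{U}_{\alpha}$. Closure under supersets follows because $A\subseteq B$ implies $^{*}A\subseteq{}^{*}B$, and closure under finite intersections follows from the identity $^{*}(A\cap B)={}^{*}A\cap{}^{*}B$. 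Finally, for the maximality (ultrafilter) property I would use $^{*}(\N\setminus A)={}^{*}\N\setminus{}^{*}A$: since $\alpha\in{}^{*}\N$, exactly one of $\alpha\in{}^{*}A$ and $\alpha\in{}^{*}(\N\setminus A)$ holds, so for every $A\subseteq\N$ either $A\in\mathfrak{U}_{\alpha}$ or its complement is. These are all immediate consequences of transfer, so part $(1)$ is routine.

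For part $(2)$, the key idea is to realize the given ultrafilter $\U$ as the collection of sets whose $^{*}$-images contain a single suitably chosen point. First I would consider the family $\{{}^{*}A\mid A\in\U\}$. Because $\U$ is a filter it has the finite intersection property, and since $^{*}(A\cap B)={}^{*}A\cap{}^{*}B$ this property is inherited by the family of $^{*}$-images: any finite intersection $^{*}A_{1}\cap\dots\cap{}^{*}A_{n}={}^{*}(A_{1}\cap\dots\cap A_{n})$ is nonempty, since $A_{1}\cap\dots\cap A_{n}\in\U$ is itself nonempty. Next, observing that $\U\subseteq\mathcal{P}(\N)$ gives $|\U|\leq\mathfrak{c}$, I would apply the $\mathfrak{c}^{+}$-enlarging property to the family $\U$ to conclude that $\bigcap_{A\in\U}{}^{*}A\neq\emptyset$. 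Choosing any $\alpha$ in this intersection yields $\alpha\in{}^{*}A$ for every $A\in\U$, that is, $\U\subseteq\mathfrak{U}_{\alpha}$.

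It remains to upgrade this inclusion to an equality, and here I would use part $(1)$ together with the maximality of ultrafilters. By part $(1)$, $\mathfrak{U}_{\alpha}$ is an ultrafilter, and $\U$ is an ultrafilter by hypothesis; since an ultrafilter is a maximal proper filter, no ultrafilter can properly contain another. Hence from $\U\subseteq\mathfrak{U}_{\alpha}$ we obtain $\U=\mathfrak{U}_{\alpha}$. The only genuinely non-formal step is the application of the enlarging property, where the main point to get right is the cardinality bound $|\U|\leq\mathfrak{c}$ that licenses its use; everything else reduces to the elementary transfer identities for $^{*}$ applied to intersections, complements, and inclusions.
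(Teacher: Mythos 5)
Your proof is correct and complete: part $(1)$ is the routine verification of the ultrafilter axioms via transfer, and part $(2)$ correctly combines the finite intersection property of $\{^{*}A\mid A\in\U\}$, the cardinality bound $|\U|\leq\mathfrak{c}$, and the maximality of ultrafilters. The paper itself gives no proof, deferring to the reference by Hirschfeld and Machover, and your argument is exactly the standard one found there.
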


The previous Theorem is proved, e.g., in \cite{rif97}. Therefore, it is natural to introduce the following definition:

\begin{defn} Given an ultrafilter $\U$ on $\N$ and a $\mathfrak{c}^{+}$-enlarged hyperextension $^{*}\N$ of $\N$, the {\bfseries set of generators}\footnote{Usually, the set $G_{\U}$ is called "monad of $\U$" (see e.g. \cite{rif98}); here we prefer to call these elements "generators" of $\U$ because, as we will show later, many properties of $\U$ are actually generated by properties of the elements in $G_{\U}$.} {\bfseries of $\U$} is
\begin{center} $G_{\U}=\{\alpha\in$$^{*}\N\mid \U=\mathfrak{U}_{\alpha}\}$. \end{center}
\end{defn}

The idea is that many combinatorial properties of $\U$ can be deduced by properties of the elements in $G_{\U}$. The exact formulation of this idea is given in the following Theorem: 

\begin{thm}[Bridge Theorem] Let $\phi(x_{1},...,x_{n})$ be a first order formula in the first order theory of arithmetic, and let $x_{1},...,x_{n}$ be the only free variables of $\phi(x_{1},...,x_{n})$. Let $\U$ be an ultrafilter in $\bN$. The following conditions are equivalent:
\begin{enumerate}
	\item $\forall A\in\U$ there are $a_{1},...,a_{n}\in A$ such that $\phi(a_{1},...,a_{n})$ holds;
	\item there are elements $\alpha_{1},...,\alpha_{n}$ in $G_{\U}$ such that $^{*}\phi(\alpha_{1},...,\alpha_{n})$ holds.
\end{enumerate}
\end{thm}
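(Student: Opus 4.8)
The plan is to prove the two implications separately. The implication $(2)\Rightarrow(1)$ should follow directly from transfer, while $(1)\Rightarrow(2)$ is where the enlarging property of $^{*}\N$ must be used, and this is the direction I expect to be the main obstacle.

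For $(2)\Rightarrow(1)$, I would assume $\alpha_{1},\dots,\alpha_{n}\in G_{\U}$ with $^{*}\phi(\alpha_{1},\dots,\alpha_{n})$ and fix an arbitrary $A\in\U$. Since each $\alpha_{i}\in G_{\U}$ means $\mathfrak{U}_{\alpha_{i}}=\U$, and $A\in\U=\mathfrak{U}_{\alpha_{i}}$ is by definition equivalent to $\alpha_{i}\in{}^{*}A$, I get $\alpha_{i}\in{}^{*}A$ for every $i$. Now consider the first-order sentence asserting the existence of $x_{1},\dots,x_{n}$ lying in $A$ and satisfying $\phi$; its $*$-transform asserts the existence of such a tuple in $^{*}A$ satisfying $^{*}\phi$, and the tuple $(\alpha_{1},\dots,\alpha_{n})$ witnesses it. By transfer the original sentence holds in $\N$, producing $a_{1},\dots,a_{n}\in A$ with $\phi(a_{1},\dots,a_{n})$. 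As $A$ was arbitrary, this gives $(1)$.

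For $(1)\Rightarrow(2)$, for each $A\in\U$ I would set $S_{A}=\{(a_{1},\dots,a_{n})\in\N^{n}\mid a_{i}\in A \text{ for all } i \text{ and } \phi(a_{1},\dots,a_{n})\}$. Hypothesis $(1)$ says precisely that every $S_{A}$ is nonempty, and since $\U$ is a filter and $S_{A\cap B}\subseteq S_{A}\cap S_{B}$, the family $\{S_{A}\mid A\in\U\}$ has the finite intersection property. Identifying $\N^{n}$ with $\N$ through a definable bijection turns these into subsets of $\N$, and since $|\U|\leq\mathfrak{c}$, the $\mathfrak{c}^{+}$-enlarging property yields a tuple $(\alpha_{1},\dots,\alpha_{n})\in\bigcap_{A\in\U}{}^{*}(S_{A})$. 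By transfer one has $^{*}(S_{A})=\{(\beta_{1},\dots,\beta_{n})\mid \beta_{i}\in{}^{*}A \text{ for all } i \text{ and } {}^{*}\phi(\beta_{1},\dots,\beta_{n})\}$, so $^{*}\phi(\alpha_{1},\dots,\alpha_{n})$ holds and, moreover, $\alpha_{i}\in{}^{*}A$ for every $A\in\U$ and every $i$, i.e. $\U\subseteq\mathfrak{U}_{\alpha_{i}}$.

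To finish I would invoke part $(1)$ of the preceding theorem, by which each $\mathfrak{U}_{\alpha_{i}}$ is an ultrafilter; since an ultrafilter containing $\U$ must equal $\U$ by maximality, it follows that $\mathfrak{U}_{\alpha_{i}}=\U$, hence $\alpha_{i}\in G_{\U}$ for every $i$, which is exactly $(2)$. The hard part is the construction in $(1)\Rightarrow(2)$: one must produce a \emph{single} tuple that simultaneously satisfies $^{*}\phi$ and generates $\U$ in every coordinate. The two delicate points are verifying the finite intersection property together with the cardinality bound $|\U|\leq\mathfrak{c}$ so that the enlarging property genuinely applies (which also requires reading the enlarging hypothesis as pertaining to families with the finite intersection property), and the maximality step that upgrades the inclusions $\U\subseteq\mathfrak{U}_{\alpha_{i}}$ to equalities.
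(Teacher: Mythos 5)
Your proof is correct, and it follows the same route as the proof the paper relies on (the paper itself only cites \cite{Tesi} for this theorem): transfer gives $(2)\Rightarrow(1)$, while $(1)\Rightarrow(2)$ uses the finite intersection property of the sets $S_{A}$ together with the $\mathfrak{c}^{+}$-enlarging property, followed by the maximality of $\U$ to upgrade $\U\subseteq\mathfrak{U}_{\alpha_{i}}$ to equality. Your two flagged "delicate points" --- reading the enlarging property as applying to families with the finite intersection property, and the cardinality bound $|\U|\leq\mathfrak{c}$ --- are exactly the right things to check, and both go through.
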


\begin{proof} This Theorem is proved in \cite{Tesi}.\end{proof}

In this paper we will talk about ultrafilters on $\N$ and $\mathbb{Q}$; it is not difficult to prove that every ultrafilter $\U$ on $\N$ can be extended (by closing under superset) to an ultrafilter on $\mathbb{Q}$ (which we will still denote by $\U$); conversely, to an ultrafilter $\U$ on $\mathbb{Q}$ it can be associated an ultrafilter on $\N$ by restriction if and only if $\N\in\U$ and, in this case, we will identify the ultrafilter $\U$ with its restriction. In terms of generators, if $\U\in\beta\mathbb{Q}$ and $\alpha\in G_{\U}$, then $\U$ can be identified with an ultrafilter on $\N$ if and only if $\alpha\in$$^{*}\N$. Moreover in $\beta\mathbb{Q}$ we can define the operator of "reciprocal of an ultrafilter $\U$": namely, given an ultrafilter $\U\in\beta\mathbb{Q}\setminus\mathfrak{U}_{0}$, we will denote by $\frac{1}{\U}$ the ultrafilter

\begin{center} $\frac{1}{\U}=\{A\subseteq\mathbb{Q}\mid \{a^{-1}\mid a\in A\}\in\U\}$. \end{center}

It is easy to prove that $G_{\frac{1}{\U}}=\{\alpha^{-1}\mid \alpha\in G_{\U}\}$.\\
Finally, we recall that an ultrafilter $\U\in\bN$ is called idempotent if $\U\oplus\U=\U$, where $\oplus$ is the unique right continuous extension of the sum $+:\N^{2}\rightarrow\N$ to $\bN$. Likewise, the ultrafilter $\U$ is a multiplicative idempotent if $\U\odot\U=\U$, where $\odot$ is is the unique right continuous extension of the product $\cdot:\N^{2}\rightarrow\N$ to $\bN$.\\
Many applications of ultrafilters in combinatorial number theory are based on idempotent ultrafilters. So, if we want to apply our nonstandard point of view, it is important to be able to express the operations $\oplus,\odot$ in terms of the sets of generators. This can be obtained by iterating the star map, as we are going to show:

\begin{defn} For every natural number $n$ we define the function 
\begin{equation*} S_{n}:\mathbb{V}(X)\rightarrow\mathbb{V}(X)\end{equation*}
by setting
\begin{equation*}S_{1}=*\end{equation*}
and, for $n\geq 1$, 
\begin{equation*}S_{n+1}=*\circ S_{n}.\end{equation*}
\end{defn}

\begin{defn} Let $\langle\mathbb{V}(X),\mathbb{V}(X),*\rangle$ be a single superstructure model of nonstandard methods. We call {\bfseries $\omega$-hyperextension} of $\N$, and we denote by $^{\bullet}\N$, the union of all the hyperextensions $S_{n}(\N)$:
\begin{center} $^{\bullet}\N=\bigcup\limits_{n\in\mathbb{N}} S_{n}(\N)$. \end{center}
\end{defn}

We observe that, as a consequence of the Elementary Chain Theorem, $^{\bullet}\N$ is a nonstandard extension of $\N$. Moreover, it is easy to prove that if $^{*}\N$ has the $\mathfrak{c}^{+}$-enlarging property then also $^{\bullet}\N$ has the $\mathfrak{c}^{+}$-enlarging property. We will always assume that the $\mathfrak{c}^{+}$-enlarging property holds.\\
To the elements of $^{\bullet}\N$ is associated a notion of "height":

\begin{defn} Let $\alpha\in$$^{\bullet}\N\setminus\N$. The {\bfseries height} of $\alpha$ $($denoted by $h(\alpha))$ is the least natural number $n$ such that $\alpha\in S_{n}(\N)$. \end{defn}

The height of an hypernatural number is needed to translate the operations $\oplus,\odot$ in terms of generators:

\begin{prop}\label{stanfa} Let $\alpha,\beta\in$$^{\bullet}\N$, $\U=\mathfrak{U}_{\alpha}$ and $\V=\mathfrak{U}_{\beta}$. Then:
\begin{enumerate}
	\item for every natural number $n$, $\mathfrak{U}_{\alpha}=\mathfrak{U}_{S_{n}(\alpha)}$;
	\item $\alpha+S_{h(\alpha)}(\beta)\in G_{\U\oplus\V}$;
	\item $\alpha\cdot S_{h(\alpha)}(\beta)\in\ G_{\U\odot\V}.$
\end{enumerate}
\end{prop}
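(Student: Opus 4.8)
The common engine for all three parts is a \emph{reduction principle}: for a standard set $C\subseteq\N$ and an element $\nu\in S_{k}(\N)$ one has $\nu\in{}^{\bullet}C$ if and only if $\nu\in S_{k}(C)$. I would establish this first, starting from the elementary identity ${}^{*}C\cap\N=C$ (transfer of ``$n\in C$'' for standard $n$), transferring it by $S_{k}$ to get $S_{j}(C)\cap S_{k}(\N)=S_{k}(C)$ for every $j\geq k$; since the sets $S_{j}(C)$ increase with $j$ and $\nu$ already lies in $S_{k}(\N)$, membership of $\nu$ in ${}^{\bullet}C=\bigcup_{j}S_{j}(C)$ collapses to membership in $S_{k}(C)$. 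Part $(1)$ is then immediate: by the reduction principle $C\in\mathfrak{U}_{\alpha}$ iff $\alpha\in S_{h(\alpha)}(C)$, and transferring the sentence ``$\alpha\in S_{h(\alpha)}(C)$'' by $S_{n}$ yields ``$S_{n}(\alpha)\in S_{h(\alpha)+n}(C)$'', which by the reduction principle (now at the level $h(\alpha)+n$ of $S_{n}(\alpha)$) is equivalent to $S_{n}(\alpha)\in{}^{\bullet}C$, i.e.\ $C\in\mathfrak{U}_{S_{n}(\alpha)}$; as $C$ is arbitrary, $\mathfrak{U}_{\alpha}=\mathfrak{U}_{S_{n}(\alpha)}$.

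For Part $(2)$ write $m=h(\alpha)$, $p=h(\beta)$, and $\gamma=\alpha+S_{m}(\beta)\in S_{m+p}(\N)$, and use the combinatorial description $A\in\U\oplus\V$ iff $B:=\{n\in\N\mid A-n\in\V\}\in\U$, where $A-n=\{k\mid n+k\in A\}$ (the convention matching right continuity, so that the left summand carries the outer ultrafilter). I would first prove the level-zero equivalence: for standard $n$, $n\in B$ iff $n+\beta\in S_{p}(A)$, since $A-n\in\V=\mathfrak{U}_{\beta}$ iff $\beta\in S_{p}(A-n)$ (reduction principle) and $S_{p}(A-n)=\{\xi\mid n+\xi\in S_{p}(A)\}$ (transfer, $n$ standard). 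This is a true sentence $\forall n\in\N\,(n\in B\leftrightarrow n+\beta\in S_{p}(A))$ about the parameters $\N,B,\beta,S_{p}(A)$; applying $S_{m}$ to it and reading off the starred sentence gives $(\star)$: for all $\nu\in S_{m}(\N)$, $\nu\in S_{m}(B)$ iff $\nu+S_{m}(\beta)\in S_{m+p}(A)$. Evaluating $(\star)$ at $\nu=\alpha$ and invoking the reduction principle on each side chains into $A\in\U\oplus\V\Leftrightarrow B\in\U\Leftrightarrow\alpha\in S_{m}(B)\Leftrightarrow\gamma\in S_{m+p}(A)\Leftrightarrow A\in\mathfrak{U}_{\gamma}$, so $\mathfrak{U}_{\gamma}=\U\oplus\V$, that is $\gamma\in G_{\U\oplus\V}$. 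Part $(3)$ is the verbatim argument with $\cdot$ and $\odot$ in place of $+$ and $\oplus$, and $A/n=\{k\mid nk\in A\}$ in place of $A-n$.

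The delicate point is the passage to $(\star)$: it is a transfer applied to a sentence whose parameters $\beta$ and $S_{p}(A)$ are \emph{themselves already nonstandard}, of height $p$. This is legitimate exactly because we work in a single superstructure model, where $*$ maps $\mathbb{V}(X)$ into itself and can be iterated, so $\beta$ and $S_{p}(A)$ are genuine elements of the domain of $S_{m}$ and transfer applies to them; the bookkeeping then requires tracking heights carefully, using $S_{m}(S_{p}(A))=S_{m+p}(A)$ and that $S_{m}$ sends $+$ to the (extended) sum on $S_{m+p}(\N)$, which is precisely where raising $\beta$ to $S_{m}(\beta)$ is forced so that $\alpha+S_{m}(\beta)$ lands at the correct level. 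The remaining ingredients, the reduction principle and the choice of orientation of the definition of $\oplus$, are routine.
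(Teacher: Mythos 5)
Your proof is correct and follows essentially the same route as the sources the paper cites for this proposition (\cite{rif84} and \cite{Tesi}, Chapter 2): the reduction of $\nu\in{}^{\bullet}C$ to $\nu\in S_{k}(C)$ at the level of $\nu$, combined with transfer of the characterization $A\in\U\oplus\V\Leftrightarrow\{n\in\N\mid A-n\in\V\}\in\U$ through the iterated star map, is exactly the argument used there. The only cosmetic looseness is that $S_{j}(C)\cap S_{k}(\N)=S_{k}(C)$ for all $j\geq k$ requires a short induction on $j$ rather than a single application of $S_{k}$; this affects nothing.
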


\begin{proof} These results have been proved in \cite{rif84} and in \cite{Tesi}, Chapter 2.\end{proof}

We conclude this section by pointing out that, as a corollary of Proposition \ref{stanfa}, we can easily characterize the property of "being idempotent" in terms of generators:

\begin{prop}\label{idultragen} Let $\U\in\bN$. Then:
\begin{enumerate}
	\item $\U\oplus\U=\U\Leftrightarrow\forall\alpha,\beta\in G_{\U}$ $\alpha+S_{h(\alpha)}(\beta)\in G_{\U}\Leftrightarrow\exists\alpha,\beta\in G_{\U} \alpha+S_{h(\alpha)}(\beta)\in G_{\U}$;
	\item $\U\odot\U=\U\Leftrightarrow\forall\alpha,\beta\in G_{\U}$ $\alpha+S_{h(\alpha)}(\beta)\in G_{\U}\Leftrightarrow\exists\alpha,\beta\in G_{\U}$ $\alpha\cdot S_{h(\alpha)}(\beta)\in G_{\U}$.
\end{enumerate}
\end{prop}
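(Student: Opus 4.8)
The plan is to derive both statements directly from Proposition \ref{stanfa}, the only extra ingredient being the elementary observation that each hypernatural number generates exactly one ultrafilter. Indeed, by definition $\gamma\in G_{\V}$ holds precisely when $\V=\mathfrak{U}_{\gamma}$, so the assignment $\gamma\mapsto\mathfrak{U}_{\gamma}$ is well defined; consequently the sets of generators of two ultrafilters are either equal or disjoint, and in particular $G_{\V_{1}}\cap G_{\V_{2}}\neq\emptyset$ forces $\V_{1}=\V_{2}$. This disjointness is exactly what will let me pass from ``some element of $G_{\U}$ witnesses the sum relation'' back to the identity $\U\oplus\U=\U$.

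I would prove part (1) as a cycle of three implications. For the implication from $\U\oplus\U=\U$ to the universal clause, fix arbitrary $\alpha,\beta\in G_{\U}$; since $\U=\mathfrak{U}_{\alpha}=\mathfrak{U}_{\beta}$, Proposition \ref{stanfa}(2) gives $\alpha+S_{h(\alpha)}(\beta)\in G_{\U\oplus\U}=G_{\U}$, which is precisely the universal statement. The passage from the universal clause to the existential one is immediate once we know $G_{\U}\neq\emptyset$: the $\mathfrak{c}^{+}$-enlarging property of $^{\bullet}\N$ guarantees that every ultrafilter admits a generator, so we may pick $\alpha\in G_{\U}$ and apply the universal clause (say to the pair $\alpha,\alpha$) to produce a witness. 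Finally, to go from the existential clause back to $\U\oplus\U=\U$, suppose $\alpha,\beta\in G_{\U}$ satisfy $\alpha+S_{h(\alpha)}(\beta)\in G_{\U}$. Because $\alpha,\beta\in G_{\U}$ we have $\mathfrak{U}_{\alpha}=\mathfrak{U}_{\beta}=\U$, so Proposition \ref{stanfa}(2) also places $\alpha+S_{h(\alpha)}(\beta)$ in $G_{\U\oplus\U}$. The element $\alpha+S_{h(\alpha)}(\beta)$ therefore lies in $G_{\U}\cap G_{\U\oplus\U}$, and by the disjointness observation above this forces $\U=\U\oplus\U$.

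Part (2) would then be carried out word for word, replacing the sum $\alpha+S_{h(\alpha)}(\beta)$ by the product $\alpha\cdot S_{h(\alpha)}(\beta)$, the operation $\oplus$ by $\odot$, and the appeal to Proposition \ref{stanfa}(2) by one to Proposition \ref{stanfa}(3). (I read the first occurrence of $\alpha+S_{h(\alpha)}(\beta)$ in the statement of (2) as a typo for $\alpha\cdot S_{h(\alpha)}(\beta)$.)

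As for difficulty, there is no real obstacle here, since essentially all the computational content is already absorbed into Proposition \ref{stanfa}. The only point requiring a moment's care is the direction from the existential clause to idempotency, where one must resist concluding from a single witness by some ad hoc computation and instead invoke the uniqueness of the ultrafilter generated by a hypernatural number; and, more pedantically, one should not forget to record that $G_{\U}$ is nonempty, so that the existential clause is genuinely linked to the universal one rather than vacuously so.
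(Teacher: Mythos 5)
Your proof is correct and follows exactly the route the paper intends: the paper's own proof consists of the single sentence that the claim ``follows easily by points (2) and (3) of Proposition \ref{stanfa}'', and your cycle of implications (using the disjointness of generator sets and the nonemptiness of $G_{\U}$ guaranteed by the $\mathfrak{c}^{+}$-enlarging property) is precisely the easy argument being alluded to, including the correct reading of the typo in part (2).
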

\begin{proof} The thesis follows easily by points (2) and (3) of Proposition \ref{stanfa}. \end{proof}

\section{Applications}

This section is dedicated to present three different applications of the technique presented in section 2.

\subsection{The Center of $\bN$}

The nonstandard technique presented in section 2 can be used to study some algebraical properties of $\bN$. As an example, we give a nonstandard proof of the following known result (see, e.g., \cite{rif12}):

\begin{prop} The center of $(\bN,\oplus)$ is $\N$. \end{prop}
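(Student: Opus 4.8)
The statement asserts that $Z(\bN,\oplus)=\N$, so the plan is to prove the two inclusions separately. The inclusion $\N\subseteq Z(\bN,\oplus)$ is the routine one: fixing $n\in\N$ and an arbitrary $\V\in\bN$ with a generator $\beta\in G_{\V}$, I would compute both products through Proposition~\ref{stanfa}(2). Because $n$ is standard, $S_{k}(n)=n$ for every $k$, so $n$ generates the principal ultrafilter $\mathfrak{U}_{n}$, and a direct computation shows that both $\V\oplus n$ and $n\oplus\V$ admit $\beta+n=n+\beta$ as a generator; hence they coincide and $n$ is central.

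For the reverse inclusion $Z(\bN,\oplus)\subseteq\N$ I would argue by contraposition: given a nonprincipal $\U\in\bN$, I must produce a $\V$ with $\U\oplus\V\neq\V\oplus\U$. Fix a generator $\alpha\in G_{\U}$; nonprincipality gives $\alpha\in{}^{*}\N\setminus\N$ and $h(\alpha)=1$. My candidate is $\V=\mathfrak{U}_{\alpha^{2}}$, the image of $\U$ under squaring, which is a genuine element of $\bN$. By Proposition~\ref{stanfa}(2) the element $\alpha+S_{1}(\alpha^{2})=\alpha+({}^{*}\alpha)^{2}$ generates $\U\oplus\V$, while $\alpha^{2}+S_{1}(\alpha)=\alpha^{2}+{}^{*}\alpha$ generates $\V\oplus\U$. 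Thus it suffices to find a single $A\subseteq\N$ whose double extension ${}^{**}A$ contains exactly one of these two hypernaturals.

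The set $A$ I would use separates by order of magnitude. Consider the partition of $\N$ into the blocks $I_{k}=[2^{2^{k}},2^{2^{k+1}})$, for which $n\in I_{k}$ iff $k=\lfloor\log_{2}\log_{2}n\rfloor$. The elementary fact that $t\in I_{k}$ implies $t^{2}\in I_{k+1}$ (valid because squaring $[2^{2^{k}},2^{2^{k+1}})$ yields exactly $[2^{2^{k+1}},2^{2^{k+2}})$) transfers to ${}^{**}\N$, so ${}^{*}\alpha$ and $({}^{*}\alpha)^{2}$ always lie in consecutive blocks, hence in blocks of opposite parity. Taking $A=\bigcup_{k\ \mathrm{even}}I_{k}$, the generator $\alpha^{2}+{}^{*}\alpha$ inherits the block of ${}^{*}\alpha$ and $\alpha+({}^{*}\alpha)^{2}$ inherits the block of $({}^{*}\alpha)^{2}$, provided the low-order summands do not push either element across a block endpoint. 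Granting this, $A$ belongs to exactly one of $\mathfrak{U}_{\alpha+({}^{*}\alpha)^{2}}$ and $\mathfrak{U}_{\alpha^{2}+{}^{*}\alpha}$, so the two ultrafilters differ.

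Two points require genuine care, and I expect the first to be the real obstacle. First, I must know that the once-starred summand dominates the Skolem hull of the other, i.e. ${}^{*}\alpha>{}^{*}g(\alpha)$ for every standard $g:\N\to\N$, and in particular $\alpha^{2}<{}^{*}\alpha$; this is the Puritz-type feature already implicit in Proposition~\ref{stanfa} (in a sum of generators the second summand lives above the Skolem hull of the first), and I would isolate and prove it as a lemma, since the whole scale computation rests on it. Second, there is an edge effect: if ${}^{*}\alpha$ (resp.\ $({}^{*}\alpha)^{2}$) happens to lie within $\alpha^{2}$ (resp.\ $\alpha$) of the top endpoint of its block, then adding the low-order term crosses into the next block and reverses the parity. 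I would neutralise this by working instead with a shifted coloring $A_{\theta}=\{n:\lfloor\log_{2}\log_{2}n+\theta\rfloor\ \mathrm{even}\}$ and choosing a standard rational $\theta$ so that $\log_{2}\log_{2}{}^{*}\alpha+\theta$ stays at bounded distance from the integers; since on the $\log_{2}\log_{2}$ scale both perturbations are infinitesimal, any such $\theta$ makes the block assignments of the two generators robust. Combining the robust block computation with the domination lemma yields $\U\oplus\V\neq\V\oplus\U$ and completes the reverse inclusion.
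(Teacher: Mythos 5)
Your proof is correct, but it separates $\U\oplus\V$ from $\V\oplus\U$ by a genuinely different mechanism than the paper. The paper does not take $\V$ canonically from $\U$: it fixes a set $A=\bigcup_k[a_{2k},a_{2k+1})$ whose blocks have lengths tending to infinity, runs a two-case analysis on whether the tail $\alpha+\N$ stays inside $^{*}A$, and in each case manufactures $\V=\mathfrak{U}_{\mu}$ with $\mu=a_{\eta}$ the left endpoint of a hyperlong interval of $^{*}A$ or $^{*}A^{c}$ of length greater than $\alpha$, so that $\alpha+{}^{*}\mu$ lands on the wrong side. You instead take the uniform witness $\V=\mathfrak{U}_{\alpha^{2}}$ and separate the two generators $\alpha+({}^{*}\alpha)^{2}$ and $\alpha^{2}+{}^{*}\alpha$ by the parity of doubly exponential blocks, which trades the paper's case analysis for a quantitative scale argument. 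Your two flagged worries are both resolvable and are not gaps. The domination lemma is exactly the transfer, with the external set $\N$ occurring as a bounded-quantifier parameter, of the true statement $\forall y\in\N\ (y<\alpha)$, which yields $\forall y\in{}^{*}\N\ (y<{}^{*}\alpha)$; this is stronger than the Skolem-hull version you need and is already proved in \cite{rif84} and \cite{Tesi} (note that the paper's own proof uses the very same trick when it transfers ``$\mu+n\in I$ for all $n\in\N$'' to ``${}^{*}\mu+\xi\in{}^{*}I$ for all $\xi\in{}^{*}\N$''). The edge effect is likewise harmless: since $\alpha^{2}/{}^{*}\alpha$ is infinitesimal, both perturbations are infinitesimal on the $\log_{2}\log_{2}$ scale, the fractional parts of $\log_{2}\log_{2}{}^{*}\alpha$ and $\log_{2}\log_{2}({}^{*}\alpha)^{2}$ coincide while the integer parts differ by exactly one, and a single standard shift $\theta$ chosen from the standard part of that fractional part keeps both values away from block boundaries. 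What your route buys is the elimination of the case split and a witness $\V$ defined uniformly from $\U$; what it costs is the extra lemma and the boundary bookkeeping, which the paper avoids by letting the block lengths grow and choosing $\mu$ adaptively.
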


\begin{proof} It is easy to prove that $\N$ is included in the center of $(\bN,\oplus)$: in fact, if $\U\in\bN$, $\V=\mathfrak{U}_{n}$ is the principal ultrafilter on $n$ and $\alpha$ is a generator of $\U$, then by Proposition \ref{stanfa} we have that $\alpha+n\in G_{\U\oplus\V}$ and $n+\alpha\in G_{\V\oplus\U}$. But $\alpha+n=n+\alpha$, so $\U\oplus\V=\V\oplus\U$.\\
To prove that the center is exactly $\N$ it is then enough to show that, for every non principal ultrafilter $\U$, there exists an ultrafilter $\V$ and a set $A\subseteq\N$ such that $A\in \U\oplus\V\Leftrightarrow A^{c}\in\V\oplus\U$. To prove this fact we let $(a_{k})_{k\in \N}$ be an increasing sequence of natural numbers such that $\lim\limits_{k\rightarrow\infty} (a_{k+1}-a_{k})= +\infty$, and we consider the set 

\begin{center} $A=\bigcup_{k\in \N} [a_{2k},a_{2k+1})$. \end{center}

We observe that, by construction, for every natural number $n$ the set $A$ contains many intervals of length greater than $n$, and the same holds for $A^{c}$. By transfer we deduce that, for every hypernatural number $\mu$, the hyperextension $^{*}A$ contains many intervals of length greater than $\mu$, and the same holds for $^{*}A^{c}$. We consider $^{*}A$, and we let $\alpha\in$$^{*}\N$ be a generator of $\U$. We suppose that $\alpha\in$$^{*}A$ (the proof in the case $\alpha\in$$^{*}A^{c}$ is similar).\\
There are two possibilities:\\
Case 1: For every natural number $n$, $\alpha+n\in$$^{*}A$. Then, by transfer, we have that, for every hypernatural number $\mu\in$$^{*}\N$, $^{*}\alpha+\mu\in$$^{**}A$, so $A\in \mathfrak{U}_{\mu}\oplus\mathfrak{U}_{\alpha}$ for every $\mu\in$$^{*}\N$. If there is an hypernatural number $\mu$ in $^{*}\N$ with $\alpha+$$^{*}\mu\in$$^{**}A^{c}$ (i.e. $A^{c}\in \mathfrak{U}_{\alpha}\oplus\mathfrak{U}_{\mu}$) we can conclude by choosing $\V=\mathfrak{U}_{\mu}$.\\
We know that $^{*}A^{c}$ contains arbitrarily long intervals, so there exists an hypernatural number $\eta$ such that the interval 

\begin{center} $I=[a_{\eta},a_{\eta+1})$\end{center}

has length greater than $\alpha$ and it is included in $^{*}A^{c}$. In particular, by letting $\mu=a_{\eta}$, we have that $\mu+n\in I$ for every natural number $n$ and so, by transfer, for every hypernatural number $\xi\in$$^{*}\N$ we have that $^{*}\mu+\xi\in$$^{*}I\subseteq$$^{**}A^{c}$. Setting $\xi=\alpha$ we get the thesis.\\
Case 2: There exists a natural number $n\in\N$ such that $\alpha+n\in$$^{*}A^{c}$. Then, since the intervals $[a_{2\eta},a_{2\eta+1})$ are infinite for $\eta\in$$^{*}\N\setminus\N$, we have that for every natural number $m\geq n$, $\alpha+m\in$$^{*}A^{c}$. By transfer it follows that, for every hypernatural number $\mu$ in $^{*}\N$ with $\mu\geq n$, $^{*}\alpha+\mu\in$$^{**}A^{c}$; in particular, $A^{c}\in \mathfrak{U}_{\mu}\oplus\mathfrak{U}_{\alpha}$ for every $\mu\in$$^{*}\N\setminus\N$. If there is an hypernatural number $\mu$ in $^{*}\N\setminus\N$ such that $\alpha+$$^{*}\mu\in$$^{**}A$ we can conclude.\\
The way in which such a $\mu$ can be found follows the same ideas of the second part of the first case: this time we find $\eta$ such that $I=[a_{2\eta},a_{2\eta+1})$ is included in $^{*}A$ and has length greater than $\alpha$. So, again, if $\mu=a_{2\eta}$ then by transfer we get that $\alpha+$$^{*}\mu\in$$^{**}I\subseteq$$^{**}A$, and this entails the thesis.\end{proof}

\subsection{A generalization of a Theorem of Bergelson and Hindman}

In this section we want to reprove and generalize a result about arithmetic progressions proved by Bergelson and Hindman in \cite{rif41}. The result we are talking about is the following:

\begin{thm}\label{Bergelson} If $\U$ is an idempotent ultrafilter then every set $A\in 2\U\oplus\U$ contains an arithmetic progression of lenght three, namely there are $a<b<c\in A$ with $c-b=b-a$.\end{thm}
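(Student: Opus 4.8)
The plan is to apply the Bridge Theorem to the ultrafilter $\W=2\U\oplus\U$ and to the formula $\phi(x_1,x_2,x_3)$ expressing that $x_1<x_2<x_3$ and $x_2-x_1=x_3-x_2$. Saying that every $A\in\W$ contains a three term arithmetic progression is exactly condition $(1)$ of the Bridge Theorem for this $\phi$; hence it suffices to verify condition $(2)$, i.e. to exhibit three generators $\gamma_1,\gamma_2,\gamma_3\in G_{\W}$ with $\gamma_1<\gamma_2<\gamma_3$ and $\gamma_2-\gamma_1=\gamma_3-\gamma_2$. So the whole problem reduces to producing a genuine (nonstandard) three term arithmetic progression inside the set of generators of $2\U\oplus\U$.

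To build such generators I would first fix $\alpha\in G_{\U}$ and use Proposition \ref{stanfa}: since $2\alpha\in G_{2\U}$ and $h(2\alpha)=h(\alpha)$, for every $\beta\in G_{\U}$ the element $2\alpha+S_{h(\alpha)}(\beta)$ belongs to $G_{\W}$. The idempotency of $\U$ enters through Proposition \ref{idultragen}: it guarantees that whenever $\mu,\nu\in G_{\U}$ the sum $\mu+S_{h(\mu)}(\nu)$ is again in $G_{\U}$, so that $G_{\U}$ (and hence $G_{\W}$) carries a rich, self-similar supply of generators at every height. The idea is then to place the ``low'' ($2\U$) parts of $\gamma_1,\gamma_2,\gamma_3$ at increasing heights and to realise the common difference so that, on passing from $\gamma_1$ to $\gamma_3$, the difference is doubled precisely inside the even low part; the associativity of $\oplus$ together with $\U\oplus\U=\U$ should then certify, via Proposition \ref{stanfa}, that each $\gamma_i$ is still a generator of $2\U\oplus\U$.

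The hard part will be exactly this last verification, and it is where the coefficient $2$ is indispensable. A naive choice of common difference (for instance a copy of a generator of $\U$ placed above $\gamma_1$) fails: passing to $\gamma_3=2\gamma_2-\gamma_1$ doubles that copy, and one is led to the \emph{false} identity $\U\oplus 2\U=\U$, so that $\gamma_3$ would generate $2\U\oplus\U\oplus 2\U\neq\W$ even though $\gamma_2=\gamma_1+(\gamma_2-\gamma_1)$ stays in $G_\W$ thanks to $\U\oplus\U=\U$. The way out should use that an idempotent ultrafilter is concentrated on the even numbers (its image under any homomorphism into a finite group, in particular into $\Z/2\Z$, is the identity), so that the generators of $\U$ are themselves even; the doubling forced by the progression is then matched by this evenness and absorbed by the ``$2$'' of $2\U$, keeping all three elements in $G_{\W}$.

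Once the three generators are checked to lie in $G_{\W}$, to be strictly increasing and to have equal gaps, the Bridge Theorem immediately yields the monochromatic progression in every $A\in 2\U\oplus\U$. I expect the same scheme -- with $2\alpha+S_{h(\alpha)}(\beta)$ replaced by a generator of $n_1\U\oplus\cdots\oplus n_k\U$, and with the hypothesis $n_{i+1}\neq n_i$ used to keep the resulting progression nondegenerate -- to give the announced generalization.
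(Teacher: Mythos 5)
Your first paragraph is exactly the paper's reduction: by the Bridge Theorem applied to $\W=2\U\oplus\U$ it suffices to produce $\gamma_1<\gamma_2<\gamma_3$ in $G_{\W}$ with $\gamma_2-\gamma_1=\gamma_3-\gamma_2$. The gap is that you never actually produce them, and the mechanism you propose for doing so is not the right one. You correctly diagnose that a common difference placed \emph{above} everything else fails (doubling it yields a generator of $2\U\oplus\U\oplus 2\U$), but the way out is not the evenness of idempotent ultrafilters. That fact is true ($n\N\in\U$ for every $n$ whenever $\U\oplus\U=\U$), but it is a dead end here: knowing that the elements of $G_{\U}$ are even gives neither $\U\oplus 2\U=\U$ nor any other absorption identity, and no explicit triple of generators is extracted from it in your sketch. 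Since the entire content of the theorem, after the Bridge Theorem reduction, is this construction, the proof is missing its core.

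The missing construction places the common difference in the \emph{middle} slot, so that it generates $\U$ when taken once and $2\U$ when doubled. Fix $\eta\in G_{\U}\cap{}^{*}\N$ and set $\gamma_1=2\eta+{}^{**}\eta$, $\gamma_2=2\eta+{}^{*}\eta+{}^{**}\eta$, $\gamma_3=2\eta+2\,{}^{*}\eta+{}^{**}\eta$; the common difference is ${}^{*}\eta$. By Proposition \ref{stanfa}, $\gamma_1\in G_{2\U\oplus\U}$; $\gamma_2\in G_{2\U\oplus\U\oplus\U}=G_{2\U\oplus\U}$ because $\U$ is idempotent; and $\gamma_3\in G_{2\U\oplus 2\U\oplus\U}=G_{2\U\oplus\U}$ because $2\U\oplus 2\U=2(\U\oplus\U)=2\U$. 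So the doubled difference is absorbed by the idempotency of $2\U$, which is inherited from that of $\U$, and no parity consideration enters at any point. The same device --- interleaving copies of $\xi,{}^{*}\xi,{}^{**}\xi,\dots$ at consecutive heights with the appropriate coefficients $n_i$ --- is what drives the proof of the generalization you mention at the end.
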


We just recall that an immediate Corollary of Theorem \ref{Bergelson} is the following:

\begin{cor}\label{pa3} For every finite coloration of $\N$ there is a monochromatic arithmetic progression of lenght three. \end{cor}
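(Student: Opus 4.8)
The plan is to derive the corollary directly from Theorem \ref{Bergelson} by exploiting the fact that a single ultrafilter decides every finite partition. The only external ingredient I need is the existence of at least one idempotent ultrafilter in $(\bN,\oplus)$; this is guaranteed by the Ellis--Namakura theorem, since $(\bN,\oplus)$ is a compact Hausdorff right-topological semigroup and every such semigroup contains an idempotent.

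So, first I would fix an idempotent ultrafilter $\U\in\bN$, i.e. one with $\U\oplus\U=\U$. Next I would form the ultrafilter $2\U\oplus\U$; here $2\U$ denotes the ultrafilter $\{A\subseteq\N\mid\{n\mid 2n\in A\}\in\U\}$, which is again a (non-principal) element of $\bN$, so that $2\U\oplus\U\in\bN$ is well defined. Now suppose $\N=C_{1}\cup\dots\cup C_{r}$ is an arbitrary finite coloration. Since $2\U\oplus\U$ is an ultrafilter and the $C_{i}$ cover $\N$, there is exactly one index $i\leq r$ with $C_{i}\in 2\U\oplus\U$, by the basic disjunction property of ultrafilters. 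Applying Theorem \ref{Bergelson} to the set $A=C_{i}\in 2\U\oplus\U$ then yields $a<b<c\in C_{i}$ with $c-b=b-a$, and since $C_{i}$ is a single color class this arithmetic progression is monochromatic, which is exactly the claim.

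I do not expect a genuine obstacle here: the entire combinatorial content sits in Theorem \ref{Bergelson}, and the corollary is just the standard passage from a \textquotedblleft $2\U\oplus\U$-largeness\textquotedblright{} statement to a partition statement. The only two points that require (routine) care are the existence of the idempotent $\U$ and the verification that $2\U\oplus\U$ is honestly an ultrafilter on $\N$ rather than on some larger set; both are standard, and in the nonstandard language of Section 2 the latter can also be seen directly, since if $\alpha\in G_{\U}$ then $2\alpha+S_{h(\alpha)}(\alpha)$ is a generator of $2\U\oplus\U$ by Proposition \ref{stanfa}.
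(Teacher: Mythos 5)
Your argument is correct and is exactly the "immediate" deduction the paper intends: one color class must lie in $2\U\oplus\U$ by the partition property of ultrafilters, and Theorem \ref{Bergelson} then supplies the progression inside that class. The paper gives no further detail, so your proposal simply makes explicit the same route, including the one genuinely external ingredient (existence of an idempotent ultrafilter via Ellis's theorem).
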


Nevertheless, Theorem \ref{Bergelson} is interesting because it adds a "qualitative" property to \ref{pa3}, namely the fact that the monochromatic set containing the arithmetic progression can always be chosen in $2\U\oplus\U$ for any given idempotent ultrafilter $\U$.\\
We now prove Theorem \ref{Bergelson} with our nonstandard technique:

\begin{proof} Let us consider $\V=2\U\oplus\U$. By applying the Bridge Theorem, we know that to prove the thesis it is enough to find $\alpha,\beta,\gamma\in G_{\V}$ such that $\alpha<\beta<\gamma$ and $\beta-\alpha=\gamma-\beta$.\\
We observe that, since $\U$ is idempotent, then also $2\U$ is idempotent, because $2\U\oplus 2\U=2(\U\oplus\U)=2\U$. Now we let $\eta\in$$^{*}\N$ be any element in $G_{\U}$; then by Proposition \ref{stanfa} and Proposition \ref{idultragen} we have that 
\begin{enumerate}
	\item $\alpha=2\eta+$$^{**}\eta\in G_{\V};$
	\item $\beta=2\eta+$$^{*}\eta+$$^{**}\eta\in G_{\V}$;
	\item $\gamma=2\eta+2$$^{*}\eta+$$^{**}\eta\in G_{\V}.$
\end{enumerate}

$\alpha<\beta<\gamma$ are three generators of $\V$ and they form an arithmetic progression of lenght three, so we have the thesis.\end{proof}

The previous Theorem can be generalized as follows:

\begin{thm}\label{general} Let $\U$ be an idempotent ultrafilter, let $k\in\N$ and let $n_{1},n_{2},...,n_{k+1}$ be natural numbers with $n_{i}\neq n_{i+1}$ for every index $i\leq k$. Then every set \begin{center}$A\in n_{1}\U\oplus n_{2}\U\oplus...\oplus n_{k}\U$\end{center} satisfies the following property: there exist $x_{1},...,x_{k},y_{1},...,y_{k},z_{1},...,z_{k}\in A$ such that the following three conditions hold:
\begin{enumerate}
  \item $\forall i\leq k$, $x_{i}<y_{i}, x_{i}<z_{i};$ 
	\item $\forall i\leq k$, $n_{i}(z_{i}-x_{i})=n_{i+1}(y_{i}-x_{i})$;
	\item $\forall i\leq k-1$, $x_{i+1}=z_{i}$.
\end{enumerate}

\end{thm}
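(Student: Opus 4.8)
The plan is to follow exactly the strategy used in the proof of Theorem~\ref{Bergelson}, which is the case $k=1$, $n_1=2$, $n_2=1$. By the Bridge Theorem, it suffices to exhibit, for the ultrafilter $\V = n_{1}\U\oplus\dots\oplus n_{k}\U$, generators witnessing each of the $3k$ elements $x_i,y_i,z_i$ (as a $3k$-tuple satisfying the stated quantifier-free arithmetic formula). So I would fix a single generator $\eta\in G_\U$ and build all the required generators as $\mathbb{Z}$-linear combinations of the tower $\eta,\,{}^{*}\eta,\,{}^{**}\eta,\dots,S_{k+1}(\eta)$, using Proposition~\ref{stanfa}(2) and Proposition~\ref{idultragen}(1) to control which ultrafilter each combination generates.

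First I would record the general shape of a generator of $\V$. Since $\U$ is idempotent, each $n_i\U$ is idempotent ($n_i\U\oplus n_i\U = n_i(\U\oplus\U)=n_i\U$), and by repeated application of Proposition~\ref{stanfa}(2) together with part~(1) (which lets one replace $\beta$ by $S_n(\beta)$ freely), the element
\begin{equation*}
n_{1}\eta + n_{2}\,{}^{*}\eta + n_{3}\,{}^{**}\eta + \dots + n_{k}\,S_{k-1}(\eta)
\end{equation*}
is a generator of $\V$; more generally one may shift the height of each summand and still land in $G_\V$, because idempotency makes adding further copies of $\eta$ at higher levels harmless. The core idea, exactly as in the $k=1$ case, is that from the single witness level one can produce several distinct generators of the \emph{same} ultrafilter that are tied together by the desired linear relations.

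The heart of the construction is to realize the three families $x_i,y_i,z_i$ so that they simultaneously satisfy the ``local'' arithmetic-progression-type relation $n_i(z_i-x_i)=n_{i+1}(y_i-x_i)$ in condition~(2) and the ``chaining'' condition $x_{i+1}=z_i$ in~(3), while each of the $3k$ elements remains a genuine generator of $\V$. I would set this up inductively on $i$: at step $i$ the pair of free ``increments'' I add at the appropriate height is chosen so that $y_i-x_i$ and $z_i-x_i$ are in ratio $n_{i+1}:n_i$, mirroring the choice $\beta-\alpha=\gamma-\beta={}^{*}\eta$ of the base case; then I force $x_{i+1}:=z_i$ and continue. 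Each $x_i,y_i,z_i$ will be of the form $n_1\eta+\dots+$ a controlled combination of the higher-level copies of $\eta$, and condition~(1), $x_i<y_i$ and $x_i<z_i$, follows because the added copies $S_m(\eta)$ are infinitely larger than everything below them (the hypothesis $n_i\neq n_{i+1}$ guarantees the relevant coefficients are nonzero so the increments are strictly positive).

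The main obstacle I anticipate is bookkeeping rather than any conceptual leap: I must choose the heights and coefficients so that three competing demands hold at once --- every one of the $3k$ elements is a $\V$-generator (this is where idempotency and Proposition~\ref{idultragen} are used to absorb extra summands), the chaining $x_{i+1}=z_i$ is respected across all $k$ blocks, and the ratio condition~(2) holds within each block with strict inequalities in~(1). The delicate point is that the element $z_i$ must serve simultaneously as the ``top'' of block $i$ and the ``bottom'' $x_{i+1}$ of block $i{+}1$, so its representation as a combination of the $S_m(\eta)$ must be compatible with both roles; getting the heights to line up so that this shared element is still a generator of $\V$ is the step that requires care, and I expect it to be exactly the place where the hypothesis $n_i\neq n_{i+1}$ and the idempotency of each $n_i\U$ are doing the real work.
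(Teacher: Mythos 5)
Your proposal follows essentially the same route as the paper's proof: reduce via the Bridge Theorem to exhibiting $3k$ generators of $\V$, build them all as integer combinations of the tower $\xi, {}^{*}\xi, {}^{**}\xi, \dots$ for a single $\xi\in G_{\U}$, use idempotency of each $n_i\U$ together with Proposition \ref{stanfa} to keep every combination in $G_{\V}$, and chain the blocks by setting $x_{i+1}=z_i$ in an induction on $i$. The paper carries out exactly this plan (with explicit height choices $S_{2(i-1)}(\xi)$ for the base element and increments $n_{i}S_{2h-1}(\xi)$, $n_{i+1}S_{2h-1}(\xi)$ within each block), so your sketch is the correct skeleton and only the height bookkeeping you already flag remains to be written out.
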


Let us observe that Theorem \ref{Bergelson} is the particular case of Theorem \ref{general} obtained by considering $k=1$ and $n_{1}=2, n_{2}=1$, and that Theorem \ref{general} is clearly much more general: e.g., if $k=2$, $n_{1}=2$, $n_{2}=1$ and $n_{3}=3$, by Theorem \ref{general} it follows that, for every idempotent ultrafilter $\U$, for every set $A\in 2\U\oplus\U\oplus 3\U$ there are $a_{1},b_{1},c_{1},a_{2},b_{2},c_{2}\in A$ such that:
\begin{enumerate}
	\item $a_{1}<b_{1}, a_{1}<c_{1}, c_{1}=a_{2}, a_{2}<b_{2}, a_{2}<c_{2}$;
	\item $2(c_{1}-a_{1})=b_{1}-a_{1}$ and $c_{2}-a_{2}=3(b_{2}-a_{2})$.
\end{enumerate}

\begin{proof} As a consequence of the Bridge Theorem, to prove the claim it is sufficient to show that, given any idempotent ultrafilter $\U$, there are elements $\alpha_{1},...,\alpha_{k},\beta_{1},...,\beta_{k},\gamma_{1},...,\gamma_{k}$ in $G_{\V}$ such that, for every index $i\leq k$, $\alpha_{i}<\beta_{i}<\gamma_{i}$, $n_{i}(\gamma_{i}-\alpha_{i})=n_{i+1}(\beta_{i}-\alpha_{i})$ and $\alpha_{i+1}=\gamma_{i}$, where

\begin{center} $\V=n_{1}\U\oplus...\oplus n_{k+1}\U$.\end{center}

Let $\xi\in G_{\U}$. We construct the elements $\alpha_{i},\beta_{i},\gamma_{i}$ inductively: let

\begin{itemize}
	\item $\alpha_{1}=\sum_{i=1}^{k}(n_{i}S_{2(i-1)}(\xi))$;
	\item $\beta_{1}=\alpha_{1}+n_{1}$$^{*}\xi$;
	\item $\gamma_{1}=\alpha_{1}+n_{2}$$^{*}\xi$.
\end{itemize}

We observe that, by construction, $n_{2}(\beta_{1}-\alpha_{1})=n_{2}\cdot n_{1}$$^{*}\xi=n_{1}(\gamma_{1}-\alpha_{1})$ and $\alpha_{1},\beta_{1},\gamma_{1}$ are generators of $\V$.\\
Now, if $\alpha_{h},\beta_{h},\gamma_{h}$ have been constructed, pose

\begin{itemize}
	\item $\alpha_{h+1}=\gamma_{h}$;
	\item $\beta_{h+1}=\alpha_{h+1}+n_{i+1}S_{2h-1}(\xi)$;
	\item $\gamma_{h+1}=\alpha_{h+1}+n_{i}S_{2h-1}(\xi)$.
\end{itemize}

We observe that $\alpha_{h+1}=\gamma_{h}$, $n_{h+2}(\beta_{h+1}-\alpha_{h+1})=n_{h+1}\cdot n_{h+1}S_{2h-1}(\xi)=n_{h+1}(\gamma_{h+1}-\alpha_{h+1})$ and that $\alpha_{h+1},\beta_{h+1},\gamma_{h+1}$ are generators of $\V$.\\
With this procedure we constuct elements $\alpha_{1},...,\alpha_{k},\beta_{1},...\beta_{k},\gamma_{1},...,\gamma_{k}$ in $G_{\V}$ with the desired properties, so we have the thesis.\end{proof}

\subsection{Partition regular polynomials}

In this section we will prove a few properties of the following two sets:

\begin{defn} The {\bfseries set of partition regular polynomials on $\N$} is 

\begin{center} $\mathcal{P}=\{P\in\Z[\mathbf{X}]\mid P$ is partition regular on $\N$$\}$,\end{center}

and the {\bfseries set of homogeneous partition regular polynomials on $\N$} is

\begin{center} $\mathcal{H}=\{P\in \mathcal{P}\mid P$ is homogeneous$\}$. \end{center}
\end{defn}

While Theorem \ref{Rado} settles the linear case, very little is known in the nonlinear case. We will always assume that the polynomials are given in normal reduced form, that the variables of $P(x_{1},...,x_{n})$ are exactly $x_{1},...,x_{n}$ and that every considered polynomial has costant term 0\footnote{As Rado proved in \cite{rif18}, already in the linear case, when the constant term of the polynomial is not zero the problem of the partition regularity of the polynomial becomes trivial (see also \cite{Mono} for a discussion on this fact).}.\\
Let us observe that the multiples of a polynomial $P(x_{1},...,x_{n})$ in $\mathcal{P}$ are in $\mathcal{P}$ so, in particular, $\mathcal{P}$ is a sub-semigroup of $(\Z[\mathbf{X}], \cdot)$ and $\mathcal{H}$ is a sub-semigroup of $(H[\mathbf{X}],\cdot)$, where $H[\mathbf{X}]=\{P(x_{1},...,x_{n})\in [\Z\mathbf{X}]\mid P(x_{1},...,x_{n})$ is homogeneous\}.\\
As a particular case of Theorem \ref{ultra} it can be proved that a polynomial is partition regular if and only if there exists an ultrafilter $\U$ such that every set $A$ in $\U$ contains a solution to the equation $P(x_{1},...,x_{n})=0$ (see e.g. \cite{Tesi}, \cite{Mono}). So it makes sense to introduce the following definition:

\begin{defn} Let $P(x_{1},...,x_{n})$ be a polynomial and $\U$ an ultrafilter on $\N$. We say that $\U$ is a $\mathbf{\sigma_{P}}${\bfseries-ultrafilter} if and only if for every set $A\in\U$ there are $a_{1},...,a_{n}\in A$ such that $P(a_{1},..,a_{n})=0$.
\end{defn}

The property of being a $\sigma_{P}-$ultrafilter can be studied by means of the following particular case of the Bridge Theorem:

\begin{thm}[Polynomial Bridge Theorem]\label{PBT} Let $P(x_{1},...,x_{n})$ be a polynomial, and $\U$ an ultrafilter on $\bN$. The following two conditions are equivalent:
\begin{enumerate}
	\item $\U$ is a $\sigma_{P}$-ultrafilter;
	\item there are elements $\alpha_{1},...,\alpha_{n}$ in $G_{\U}$ such that $P(\alpha_{1},...,\alpha_{n})=0$.
\end{enumerate}
\end{thm}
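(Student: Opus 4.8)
The plan is to recognize the statement as a direct specialization of the Bridge Theorem, applied to the first-order formula that expresses the polynomial equation. First I would set
\[
\phi(x_{1},\dots,x_{n}) \;\equiv\; \bigl(P(x_{1},\dots,x_{n})=0\bigr),
\]
and check that this is a legitimate first-order formula in the first order theory of arithmetic with exactly $x_{1},\dots,x_{n}$ as free variables. Since $P\in\Z[\mathbf{X}]$ may carry negative coefficients, I would rewrite $P=0$ as $P^{+}=P^{-}$, where $P^{+}$ collects the monomials with positive coefficient and $P^{-}$ the (absolute values of the) monomials with negative coefficient; both $P^{+}$ and $P^{-}$ are then terms built only from $+$, $\cdot$ and natural-number constants, so $\phi$ is an atomic formula of arithmetic, as required by the hypotheses of the Bridge Theorem.

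With this choice, condition $(1)$ of the Bridge Theorem reads: for every $A\in\U$ there are $a_{1},\dots,a_{n}\in A$ such that $P(a_{1},\dots,a_{n})=0$. This is verbatim the definition of $\U$ being a $\sigma_{P}$-ultrafilter, so condition $(1)$ of the Bridge Theorem coincides with condition $(1)$ of the present statement. Next I would identify the transferred formula $^{*}\phi$. Because the coefficients of $P$ are standard integers and $^{*}n=n$ for every $n\in\N$, transfer leaves the coefficients unchanged and merely reinterprets $+,\cdot$ as the internal operations of $^{*}\N$. Hence, for $\alpha_{1},\dots,\alpha_{n}\in G_{\U}$, the formula $^{*}\phi(\alpha_{1},\dots,\alpha_{n})$ asserts $P^{+}(\alpha_{1},\dots,\alpha_{n})=P^{-}(\alpha_{1},\dots,\alpha_{n})$, i.e. $P(\alpha_{1},\dots,\alpha_{n})=0$ computed in $^{*}\N$. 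Thus condition $(2)$ of the Bridge Theorem becomes precisely condition $(2)$ of the present statement, and invoking the Bridge Theorem yields the equivalence $(1)\Leftrightarrow(2)$, which is the thesis.

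The only genuinely substantive step, and the one I expect to be the main (though minor) obstacle, is the verification that $\phi$ is a bona fide first-order formula of arithmetic and that $^{*}\phi$ is again ``$P=0$'' with the \emph{same} integer coefficients; this is exactly the place where the convention $^{*}n=n$ for standard $n$ is used. Once this is granted, the result is an immediate instance of the Bridge Theorem and requires no further computation.
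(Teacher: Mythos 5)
Your proposal is correct and matches the paper's approach exactly: the paper offers no separate proof, introducing the statement verbatim as ``the following particular case of the Bridge Theorem,'' which is precisely the specialization you carry out. Your extra care in rewriting $P=0$ as $P^{+}=P^{-}$ so that $\phi$ is a genuine atomic formula of arithmetic, and in checking that transfer fixes the standard integer coefficients, fills in the details the paper leaves implicit.
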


As a first example of application of Theorem \ref{PBT} we prove that the study of the partition regularity of polynomials can be restricted to irreducible polynomials. For sake of simplicity, both in the enunciate and in the proof of the following Theorem, when writing $Q_{i}(x_{1},...,x_{n})$ we mean that the set of variables of $Q_{i}$ is a subset of $\{x_{1},...,x_{n}\}$ and, when $\alpha_{1},...,\alpha_{n}$ are hypernatural numbers in $^{\bullet}\N$, by $Q(\alpha_{1},...,\alpha_{n})$ we denote the number obtained by replacing each variable $x_{i}$ in $Q$ by $\alpha_{i}$.

\begin{thm}\label{factoriz} Let $P(x_{1},...,x_{n})\in\Z[X]$, and let us suppose that $P(x_{1},...,x_{n})$ can be factorized as follows: 

\begin{center} $P(x_{1},....,x_{n})=\prod_{i=1}^{k}Q_{i}(x_{1},...,x_{n})$.\end{center} 

If $P(x_{1},...,x_{n})\in \mathcal{P}$ then there exists an index $i$ such that $Q_{i}(x_{1},...,x_{n})\in \mathcal{P}$. \end{thm}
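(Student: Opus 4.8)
The plan is to use the Polynomial Bridge Theorem (Theorem~\ref{PBT}) to translate the partition regularity of the product $P=\prod_{i=1}^{k}Q_{i}$ into the existence of a single tuple of generators of some ultrafilter $\U$ on which $P$ vanishes, and then to exploit the fact that a product of integers is zero precisely when one of its factors is zero. First I would assume $P(x_{1},\dots,x_{n})\in\mathcal{P}$. By Theorem~\ref{PBT} there exists an ultrafilter $\U$ on $\N$ that is a $\sigma_{P}$-ultrafilter, and hence there are generators $\alpha_{1},\dots,\alpha_{n}\in G_{\U}$ with $P(\alpha_{1},\dots,\alpha_{n})=0$. Since these are hypernatural numbers in $^{\bullet}\N$, the value $P(\alpha_{1},\dots,\alpha_{n})=\prod_{i=1}^{k}Q_{i}(\alpha_{1},\dots,\alpha_{n})$ is an element of $^{\bullet}\N$, which is an integral domain (being an ordered field-like extension, or more precisely because $^{\bullet}\N$ sits inside $^{\bullet}\Z$, which has no zero divisors by transfer). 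Therefore at least one factor must vanish: there is an index $i$ with $Q_{i}(\alpha_{1},\dots,\alpha_{n})=0$.

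The next step is to recover partition regularity of $Q_{i}$ from this single vanishing. The generators $\alpha_{1},\dots,\alpha_{n}$ all belong to $G_{\U}$, the same set of generators of the same ultrafilter $\U$. Since the variables of $Q_{i}$ form a subset of $\{x_{1},\dots,x_{n}\}$, the equation $Q_{i}(\alpha_{1},\dots,\alpha_{n})=0$ is an instance of the tuple of generators of $\U$ satisfying the polynomial equation $Q_{i}=0$. Applying the Polynomial Bridge Theorem in the reverse direction to the polynomial $Q_{i}$ and the same ultrafilter $\U$, condition (2) holds for $Q_{i}$, hence condition (1) holds: $\U$ is a $\sigma_{Q_{i}}$-ultrafilter. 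By the characterization of partition regular polynomials recalled just before the definition of $\sigma_{P}$-ultrafilter (the existence of an ultrafilter witnessing monochromatic solutions in every member is equivalent to partition regularity), this yields $Q_{i}(x_{1},\dots,x_{n})\in\mathcal{P}$, as desired.

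I expect the main subtlety, rather than a genuine obstacle, to lie in the bookkeeping around variables and the application of the Bridge Theorem to a factor whose variable set is a proper subset of $\{x_{1},\dots,x_{n}\}$. One must be careful that the same generators $\alpha_{1},\dots,\alpha_{n}$ of $\U$ serve simultaneously for $P$ and for $Q_{i}$; this is exactly why the convention fixed in the paragraph preceding the theorem, namely that $Q_{i}$ uses a subset of the variables and $Q_{i}(\alpha_{1},\dots,\alpha_{n})$ means substitution of $\alpha_{j}$ for $x_{j}$, is needed, since it guarantees the evaluation is well defined and the missing variables simply do not appear. The only other point deserving care is the claim that a product in $^{\bullet}\N$ vanishes only when a factor does; this follows by transfer from the fact that $\Z$ is an integral domain, applied inside the appropriate iterated hyperextension $S_{n}(\Z)$ containing all the relevant generators. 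Everything else is a direct double application of Theorem~\ref{PBT}.
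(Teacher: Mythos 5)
Your proposal is correct and follows essentially the same route as the paper: take a $\sigma_{P}$-ultrafilter $\U$ with generators on which $P$ vanishes, conclude by absence of zero divisors (transfer) that some factor $Q_{i}$ vanishes on the same generators, and apply the Polynomial Bridge Theorem in reverse to get that $\U$ is a $\sigma_{Q_{i}}$-ultrafilter. The paper's proof is just a terser version of yours, leaving the zero-divisor step and the variable bookkeeping implicit.
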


\begin{proof} Let $\U$ be a $\sigma_{P}$-ultrafilter and let $\alpha_{1},...,\alpha_{n}$ be generators of $\U$ such that $P(\alpha_{1},...,\alpha_{n})=0$. Then, for at least one index $i$, $Q_{i}(\alpha_{1},...,\alpha_{n})=0$, so $\U$ is a $\sigma_{Q_{i}}$-ultrafilter, and hence $Q_{i}(x_{1},...,x_{n})$ is in $\mathcal{P}$.\end{proof}

A consequence of Theorem \ref{factoriz} is that, to study the partition regularity of polynomials, it is sufficient to work with irreducible polynomials. In particular, $\mathcal{P}$ is an union of principal ideals in $(\mathbb{Z}[X],\cdot)$ generated by irreducible polynomials. Moreover let us note that, in the same hypothesis of Theorem \ref{factoriz}, the following holds:

\begin{center} $\{\U\in\bN\mid\U$ is a $\sigma_{P}-$ultafilter\}=$\bigcup\limits_{i=1}^{k}\{\U\in\bN\mid\U$ is a $\sigma_{Q_{i}}-$ultafilter\}.\end{center}

Another question that we can ask about $\mathcal{P}$ is: is $\mathcal{P}$ closed under sum? This is clearly false: e.g., the polynomials $P(x,y,z)=x-y+z$ and $Q(x,y,w)=y-x+w$ are partition regular by Rado's Theorem, but their sum is $z+w$ which is not partition regular. As one can imagine, the problem here is that the polynomials $P$ and $Q$ have some variables in common. So we can modify our question as follows: if $P$ and $Q$ are partition regular and do not have variables in common, is $P+Q$ partition regular? In this case, we can prove the following:

\begin{thm}\label{summ} Let $n,m$ be natural numbers and let $P(x_{1},...,x_{n})$, $Q(y_{1},...,y_{m})\in\mathcal{H}$. Moreover, let us suppose that $\{x_{1},...,x_{n}\}\cap\{y_{1},..,y_{m}\}=\emptyset$.\\
Then $P(x_{1},...,x_{n})+Q(y_{1},...,y_{m})\in\mathcal{P}$.\end{thm}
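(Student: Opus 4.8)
The plan is to use the Polynomial Bridge Theorem (Theorem \ref{PBT}) to reduce the claim to finding, for some ultrafilter $\V$, generators $\alpha_1,\dots,\alpha_n,\gamma_1,\dots,\gamma_m\in G_\V$ with $P(\alpha_1,\dots,\alpha_n)+Q(\gamma_1,\dots,\gamma_m)=0$. Since $P$ and $Q$ are partition regular, there are $\sigma_P$- and $\sigma_Q$-ultrafilters; by Theorem \ref{PBT} we may fix generators $\mu_1,\dots,\mu_n\in G_{\U_1}$ with $P(\mu_1,\dots,\mu_n)=0$ and $\nu_1,\dots,\nu_m\in G_{\U_2}$ with $Q(\nu_1,\dots,\nu_m)=0$. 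The difficulty is that these witnesses live in possibly different ultrafilters and, more importantly, that forcing $P$ and $Q$ each to vanish separately is far too strong: the equation $P+Q=0$ only requires their values to be negatives of one another, and we must arrange all the witnesses to belong to a common $\V$.

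The key idea I would exploit is homogeneity. Because $P$ is homogeneous of some degree $d_P$, we have $P(\lambda\alpha_1,\dots,\lambda\alpha_n)=\lambda^{d_P}P(\alpha_1,\dots,\alpha_n)$, and similarly $Q$ is homogeneous of degree $d_Q$. I would first pick a single idempotent-flavored generator to unify the coloring: take any $\xi\in{}^\bullet\N$ generating a well-chosen ultrafilter, and scale the two vanishing tuples by iterated star-images so that they sit at different heights and hence generate the \emph{same} ultrafilter $\V$. Concretely, using Proposition \ref{stanfa}(1), $\mathfrak{U}_\alpha=\mathfrak{U}_{S_k(\alpha)}$, so replacing a tuple by its $S_k$-image does not change which ultrafilter it generates; this lets me place the $P$-witnesses and the $Q$-witnesses at disjoint heights while keeping them all in one $G_\V$. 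The plan is to choose $\V$ so that there exist generators realizing $P=0$ and $Q=0$ simultaneously within $G_\V$, after which $P+Q=0$ holds automatically for that common tuple, and the disjointness of the variable sets guarantees no variable is over-constrained.

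The main obstacle will be guaranteeing that a single ultrafilter $\V$ simultaneously carries witnesses for $P=0$ and for $Q=0$. I expect the cleanest route is to build $\V$ as a product (in the sense of $\oplus$ or $\odot$) whose generators are sums or concatenations of the scaled $P$- and $Q$-witnesses, analogous to the construction in the proof of Theorem \ref{general}, where generators at staggered heights $S_{2(i-1)}(\xi)$ were combined. I would take generators of the form $S_{j}(\mu_i)$ for the $P$-block and $S_{j'}(\nu_i)$ for the $Q$-block at disjoint heights, verify via Proposition \ref{stanfa} that each assembled element lies in one common $G_\V$, and then check that substituting them makes $P$ vanish on its block (by transfer and the original relation $P(\mu_1,\dots,\mu_n)=0$, which is preserved under the height-shift since $S_j$ is an elementary map) and likewise $Q$ vanish on its block. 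Homogeneity is what makes the shifting harmless: the relations $P=0$ and $Q=0$ are scale-invariant and height-invariant, so after the shifts we still have $P=0$ and $Q=0$, hence $P+Q=0$. The verification that all assembled elements share the \emph{same} $\mathfrak{U}_\alpha$ is the delicate bookkeeping step, and is where I would spend the most care.
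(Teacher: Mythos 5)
You have correctly identified the two essential ingredients---the Polynomial Bridge Theorem and the homogeneity of $P$ and $Q$---but the construction you sketch does not actually produce witnesses lying in a common set of generators, and that is precisely the crux of the proof. Replacing a tuple by its image under $S_{k}$ does not change which ultrafilter it generates (Proposition \ref{stanfa}, part (1)), so $S_{j}(\mu_{i})$ still lies in $G_{\U_{1}}$ and $S_{j'}(\nu_{i'})$ still lies in $G_{\U_{2}}$; placing the two blocks at ``disjoint heights'' therefore does \emph{not} put them all in one $G_{\V}$ unless $\U_{1}=\U_{2}$, which you cannot assume. Height-shifting alone cannot unify the two ultrafilters, and no single $\V$ equal to one of them will work in general.

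The step you are missing is the concrete form of the generators of the product ultrafilter. The paper takes $\V=\U_{1}\odot\U_{2}$ and, with $\mu_{1},...,\mu_{n}\in G_{\U_{1}}\cap{}^{*}\N$ and $\nu_{1},...,\nu_{m}\in G_{\U_{2}}\cap{}^{*}\N$ satisfying $P(\mu_{1},...,\mu_{n})=Q(\nu_{1},...,\nu_{m})=0$, forms $\eta_{i}=\mu_{i}\cdot{}^{*}\nu_{1}$ and $\xi_{j}=\mu_{1}\cdot{}^{*}\nu_{j}$. By Proposition \ref{stanfa}, part (3), each of these products is a generator of $\U_{1}\odot\U_{2}$: every generator must be a product of a $\U_{1}$-witness with a starred $\U_{2}$-witness, and neither factor can be omitted. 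Homogeneity then enters exactly where you anticipated, but in multiplicative form: since the second factor ${}^{*}\nu_{1}$ is the \emph{same} across the whole $P$-block, $P(\eta_{1},...,\eta_{n})=({}^{*}\nu_{1})^{d_{P}}P(\mu_{1},...,\mu_{n})=0$, and symmetrically $Q(\xi_{1},...,\xi_{m})=\mu_{1}^{d_{Q}}Q({}^{*}\nu_{1},...,{}^{*}\nu_{m})=0$, where $Q({}^{*}\nu_{1},...,{}^{*}\nu_{m})=0$ follows by transfer. Hence $P+Q$ vanishes on a tuple of generators of $\U_{1}\odot\U_{2}$ and Theorem \ref{PBT} concludes. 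Your final paragraph gestures toward a product construction, but without the device of padding each witness by a fixed generator of the \emph{other} ultrafilter the argument does not close.
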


\begin{proof} Let $\U$ be a $\sigma_{P}$-ultrafilter and let $\V$ be a $\sigma_{Q}$-ultrafilter. To prove the result it is sufficient to find a $\sigma_{P+Q}$-ultrafilter. We claim that the ultrafilter $\U\odot\V$ is a $\sigma_{P+Q}$-ultrafilter.\\
To prove our claim, we let $\alpha_{1},...,\alpha_{n}\in$$^{*}\N\cap G_{\U}$, $\beta_{1},...,\beta_{m}\in$$^{*}\N\cap G_{\V}$ be generators of $\U,\V$ such that

\begin{center} $P(\alpha_{1},...,\alpha_{n})=Q(\beta_{1},...,\beta_{m})=0$. \end{center}

By transfer we also have that $Q($$^{*}\beta_{1},...,$$^{*}\beta_{m})=0$. Now let us consider the elements $\eta_{1},...,\eta_{n},\xi_{1},...,\xi_{m}$ in $G_{\U\odot\V}$ where, for $i\leq n$ and $j\leq m$, we set:

\begin{center} $\eta_{i}=\alpha_{i}\cdot$$^{*}\beta_{1}$ and $\xi_{j}=\alpha_{1}\cdot$$^{*}\beta_{j}$. \end{center}

We know by hypothesis that $P(x_{1},...,x_{n})$ and $Q(y_{1},...,y_{m})$ are homogeneous. So, if $d_{p}$ and $d_{q}$ are their respective degrees, we have:

\begin{center} $P(\eta_{1},...,\eta_{n})+Q(\xi_{1},...,\xi_{m})=$$^{*}\beta_{1}^{d_{p}}P(\alpha_{1},...,\alpha_{n})+\alpha_{1}^{d_{1}}Q($$^{*}\beta_{1},...,$$^{*}\beta_{m})=0+0=0$.\end{center}

But $\eta_{1},...,\eta_{n},\xi_{1},...,\xi_{m}$ are generators of $\U\odot\V$, so we can apply Theorem \ref{PBT} and we obtain the thesis.\end{proof}

We note that if $P$ and $Q$ have the same degree then $P+Q$ is homogeneous, so $P+Q\in\mathcal{H}$.\\
Some results related to Theorem \ref{summ} are proved in \cite{Tesi}.\\
An interesting property of the set $\mathcal{H}$ is that it is closed under the following particular operation:

\begin{center} $P(x_{1},...,x_{n})\rightarrow (x_{1}\cdot...\cdot x_{n})^{d_{P}}P(\frac{1}{x_{1}},...,\frac{1}{x_{n}})$, \end{center}

where $d_{P}$ is the degree of $P(x_{1},...,x_{n})$. This property has been proved by T.C. Brown and V. Rödl in \cite{rif4} in a more general formulation. Here we present a nonstandard proof:

\begin{thm} Let $P(x_{1},...x_{n})$ be a polynomial in $\mathcal{H}$ and let $d$ be the degree of $P(x_{1},...,x_{n})$. Then the polynomial $Q(x_{1},...,x_{n})=x_{1}^{d}\cdot....\cdot x_{n}^{d}\cdot P(\frac{1}{x_{1}},...,\frac{1}{x_{n}})$ is in $\mathcal{H}$. \end{thm}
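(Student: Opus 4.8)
The plan is to prove $Q\in\mathcal{H}$ in two pieces. That $Q$ is homogeneous is the cheap half: a monomial $c\prod_i x_i^{e_i}$ of $P$ (with $\sum_i e_i=d$) becomes $c\prod_i x_i^{d-e_i}$ in $Q$, of degree $nd-d=(n-1)d$, so $Q$ is homogeneous of degree $(n-1)d$. Hence everything reduces to showing $Q\in\mathcal{P}$, and by the Polynomial Bridge Theorem \ref{PBT} this means exhibiting a \emph{single} ultrafilter $\U$ and generators $\xi_{1},\dots,\xi_{n}\in G_{\U}$ with $Q(\xi_{1},\dots,\xi_{n})=0$. Since each $\xi_i$ is infinite, $\prod_i\xi_i\neq 0$, and because $Q(x_1,\dots,x_n)=(x_1\cdots x_n)^{d}P(1/x_1,\dots,1/x_n)$ we have $Q(\xi)=0$ iff $P(\xi_1^{-1},\dots,\xi_n^{-1})=0$. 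Recalling $G_{1/\U}=\{\alpha^{-1}\mid\alpha\in G_{\U}\}$, this says precisely that the reciprocal ultrafilter $1/\U$ is a $\sigma_{P}$-ultrafilter. So the theorem reduces to finding an ultrafilter $\U$ \emph{on $\N$} whose reciprocal $1/\U$ is a $\sigma_{P}$-ultrafilter.

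To build the candidate generators I would start from the partition regularity of $P$: there is a $\sigma_{P}$-ultrafilter, hence by \ref{PBT} a solution $P(\alpha_{1},\dots,\alpha_{n})=0$ with $\alpha_{1},\dots,\alpha_{n}\in G_{\U}$. Dividing through by $\pi=\prod_{j}\alpha_{j}$ and using homogeneity of $P$ gives $P(1/\beta_1,\dots,1/\beta_n)=0$, where $\beta_{i}=\prod_{j\neq i}\alpha_{j}$, and therefore $Q(\beta_{1},\dots,\beta_{n})=0$. Thus the natural candidates are the cofactor products $\beta_{i}=\prod_{j\neq i}\alpha_{j}$, and this identity holds no matter what the heights of the $\alpha_{j}$ are.

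The main obstacle is exactly that \ref{PBT} requires $\beta_{1},\dots,\beta_{n}$ to generate \emph{one and the same} ultrafilter, i.e.\ $\mathfrak{U}_{\beta_1}=\cdots=\mathfrak{U}_{\beta_n}$. For $n=2$ this is automatic ($\beta_1=\alpha_2,\ \beta_2=\alpha_1$, both generating $\U$), but for $n\geq 3$ each $\beta_i$ is a product of $n-1$ equal-height generators, and Proposition \ref{idultragen} shows that equal-height products are not controlled by the algebra of $\U$. My proposed remedy is to take $\U$ to be a \emph{multiplicatively idempotent} $\sigma_P$-ultrafilter (so $\U^{\odot k}=\U$) and to realize the starting solution with its coordinates at \emph{distinct} heights, $\alpha_j=S_{\sigma(j)}(\mu_j)$ with the $\sigma(j)$ pairwise distinct and $\mu_j\in G_{\U}$. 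Then each cofactor product $\beta_i=\prod_{j\neq i}S_{\sigma(j)}(\mu_j)$ is a product of generators of $\U$ taken in strictly increasing height, so by iterating part (3) of Proposition \ref{stanfa} (absorbing the height gaps into $S$-shifts of generators, which stay in $G_{\U}$ by part (1)) together with $\U^{\odot(n-1)}=\U$, every $\beta_i$ lands in $G_{\U}$. All the $\beta_i$ would then generate the one ultrafilter $\U$, and \ref{PBT} applies. Using homogeneity to pull out the common factor $\Pi=\prod_j S_{\sigma(j)}(\mu_j^{-1})$ from $\beta_i^{-1}=\Pi\cdot S_{\sigma(i)}(\mu_i)$, the requirement $P(\beta_1^{-1},\dots,\beta_n^{-1})=0$ collapses to the single \emph{staggered} equation $P\bigl(S_{\sigma(1)}(\mu_1),\dots,S_{\sigma(n)}(\mu_n)\bigr)=0$.

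This last point is where the real difficulty sits, and it is the step I expect to fight with: transfer only yields solutions whose coordinates sit at a common height, so a solution spread over distinct heights cannot be obtained merely by starring an equal-height solution. Producing it must exploit the multiplicative idempotency of $\U$ (the way $\U$ reproduces itself across heights under $\odot$), and it is in effect the nonstandard incarnation of the genuine combinatorial content of the Brown--R\"odl theorem. I would attempt to establish it by a multiplicative Ramsey-type argument inside $G_{\U}$, showing that a multiplicatively idempotent $\sigma_{P}$-ultrafilter admits $P$-solutions along the tensor powers $\U\otimes\cdots\otimes\U$; equivalently, that $P$ partition regular forces the existence of an $\N$-ultrafilter $\U$ with $1/\U$ again $\sigma_{P}$, which is exactly the assertion that closes the argument.
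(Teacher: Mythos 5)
Your reduction is set up correctly (homogeneity of $Q$, the equivalence $Q(\xi_{1},\dots,\xi_{n})=0\Leftrightarrow P(\xi_{1}^{-1},\dots,\xi_{n}^{-1})=0$, and the need for a \emph{single} ultrafilter via Theorem \ref{PBT}), but the argument has a genuine gap exactly where you flag it. Your choice of representatives $\beta_{i}=\prod_{j\neq i}\alpha_{j}$ forces you to need a solution of $P$ whose coordinates sit at pairwise distinct heights, $P(S_{\sigma(1)}(\mu_{1}),\dots,S_{\sigma(n)}(\mu_{n}))=0$; transfer of the equal-height solution will never give this, and your closing sentence concedes that the missing step is equivalent to the statement that $1/\U$ is again a $\sigma_{P}$-ultrafilter for some $\U$ on $\N$ --- which is precisely the theorem being proved. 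So the proposal is circular at its final step, and nothing in the paper's toolkit (multiplicative idempotency included) is shown to produce such a staggered solution.

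The paper sidesteps this entirely by choosing a different normalization. Take $\alpha_{1},\dots,\alpha_{n}\in{}^{*}\N\cap G_{\U}$ with $P(\alpha_{1},\dots,\alpha_{n})=0$ and set $\beta_{i}=1/\alpha_{i}$: these are hyperrationals that \emph{already} all generate the single ultrafilter $\frac{1}{\U}\in\beta\Q$, and $Q(\beta_{1},\dots,\beta_{n})=0$. Homogeneity of $Q$ is then used only to rescale: for any $\gamma$ one still has $Q(\gamma\beta_{1},\dots,\gamma\beta_{n})=0$. Choosing $\gamma={}^{*}(\alpha_{1}!)=({}^{*}\alpha_{1})!$ accomplishes both remaining tasks at once: since ${}^{*}\alpha_{1}>\alpha_{i}$ for every $i$, $\gamma$ is divisible by each $\alpha_{i}$, so $\gamma\beta_{i}=\gamma/\alpha_{i}\in{}^{\bullet}\N$; and by Proposition \ref{stanfa} (its $\beta\Q$ analogue) the elements $\gamma\beta_{i}$ are all generators of the one ultrafilter $\frac{1}{\U}\odot\mathfrak{U}_{\gamma}$, which lies in $\bN$. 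The Polynomial Bridge Theorem then applies directly, with no idempotency and no distinct-height solution of $P$ required. If you want to salvage your write-up, replace the cofactor products by the reciprocals and import this common-multiplier trick; as it stands, the proof is incomplete.
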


\begin{proof} Let $\U$ be a $\sigma_{P}$-ultrafilter and let $\alpha_{1},...,\alpha_{n}\in$$^{*}\N$ be generators of $\U$ with $P(\alpha_{1},...,\alpha_{n})=0$. Let us consider the hyperrational numbers 

\begin{center} $\beta_{1}=\frac{1}{\alpha_{1}}$, ... ,$\beta_{n}=\frac{1}{\alpha_{n}}$. \end{center}

By construction, $\beta_{1},...,\beta_{n}$ are generators of the ultrafilter $\frac{1}{\U}\in\beta\mathbb{Q}$. Moreover, 

\begin{center} $P(\frac{1}{\beta_{1}},...,\frac{1}{\beta_{n}})=0$,\end{center}

so $Q(\beta_{1},...,\beta_{n})=0$. But $P(x_{1},...,x_{n})$ is homogeneous so $Q(\beta_{1},...,\beta_{n})$ is also homogeneous and, for every $\gamma\in$$^{\bullet}\N$, we have that 

\begin{center} $Q(\gamma\cdot \beta_{1},...,\gamma\cdot\beta_{n})=0$. \end{center}

Now we consider the hypernatural number 

\begin{center} $\eta=(\alpha_{1})!$ \end{center}

(where $(\alpha_{1})!$ denotes the factorial of $\alpha_{1}$), and we set

\begin{center}$\gamma=$$^{*}\eta$.\end{center}

Then $\gamma$ satisfies the following two properties: 
\begin{enumerate}

	\item for every $i\leq n$ we have that $\gamma\cdot\beta_{i}\in G_{\frac{1}{\U}\odot\V}$, where $\V=\mathfrak{U}_{\gamma}$;
	\item $\gamma\cdot\beta_{i}\in$$^{\bullet}\N$ for every $i\leq n$.

\end{enumerate}

In fact:

\begin{enumerate}

	\item this property follows directly from Proposition \ref{stanfa} (or, to be precise, from the analogous property that holds for $\beta\mathbb{Q}$, see \cite{Tesi});
	\item let us observe that $\gamma=$$^{*}(\alpha_{1}!)=$$(^{*}\alpha_{1})!$. So, since $^{*}\alpha_{1}>\alpha_{i}$ for every $i\leq n$ (because $^{*}\alpha_{1}\in$$^{**}\N\setminus$$^{*}\N$, while $\alpha_{i}\in$$^{*}\N$), $\gamma$ is divisible by each $\alpha_{i}$, which means that $\gamma\cdot\beta_{i}=\frac{\gamma}{\alpha_{i}}\in$$^{\bullet}\N$ for every $i\leq n$.
	\end{enumerate}
	
So by (1) it follows that, for every $i\leq n$, $\gamma\cdot\beta_{i}$ are generators of the ultrafilter $\frac{1}{\U}\odot\V\in\beta\mathbb{Q}$; by (2) it follows that, in particular, $\frac{1}{\U}\odot\V\in\beta\mathbb{N}$. But we know that $Q(\gamma\cdot\beta_{1},...,\gamma\cdot\beta_{n})=0$, so we can apply the Polynomial Bridge Theorem and we can conclude.\end{proof}

We conclude this section by showing an example of a result on the partition regularity of nonlinear polynomials that has been proved in \cite{Mono}, that we recall:

\begin{defn} Let $m$ be a positive natural number, and let $\{y_{1},...,y_{m}\}$ be a set of mutually distinct variables. For every finite set $F\subseteq\{1,..,m\}$ we denote by $Q_{F}(y_{1},...,y_{m})$ the monomial
\begin{center} $Q_{F}(y_{1},...,y_{m})=\begin{cases} \prod\limits_{j\in F} y_{j}, & \mbox{if  } F\neq \emptyset;\\ 1, & \mbox{if  } F=\emptyset.\end{cases}$ \end{center}
\end{defn}
\begin{thm}\label{lev} Let $n\geq 2$ be a natural number, let $R(x_{1},...,x_{n})=\sum\limits_{i=1}^{n} a_{i}x_{i}$ be a partition regular polynomial, and let $m$ be a positive natural number. Then, for every $F_{1},...,F_{n}\subseteq\{1,..,m\}$ $($with the request that, when $n=2$, $F_{1}\cup F_{2}\neq\emptyset)$, the polynomial
\begin{center} $P(x_{1},...,x_{n},y_{1},...,y_{m})=\sum\limits_{i=1}^{n} a_{i}x_{i}Q_{F_{i}}(y_{1},...,y_{m})$ \end{center}
is partition regular. \end{thm}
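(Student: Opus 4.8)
The plan is to use the Polynomial Bridge Theorem (\ref{PBT}): since a polynomial is partition regular exactly when it admits a $\sigma_{P}$-ultrafilter, it suffices to produce a single ultrafilter $\U$ together with generators $\xi_{1},\dots,\xi_{n},\zeta_{1},\dots,\zeta_{m}\in G_{\U}$ (the first $n$ playing the role of the $x_{i}$, the last $m$ that of the $y_{j}$) such that $P(\xi_{1},\dots,\xi_{n},\zeta_{1},\dots,\zeta_{m})=0$. The guiding identity is this: if I can arrange $\xi_{i}=\alpha_{i}\cdot\prod_{j\notin F_{i}}\zeta_{j}$ with $\sum_{i=1}^{n}a_{i}\alpha_{i}=0$, then each monomial becomes $a_{i}\xi_{i}Q_{F_{i}}(\zeta)=a_{i}\alpha_{i}\prod_{j=1}^{m}\zeta_{j}$, so that $P=\bigl(\prod_{j=1}^{m}\zeta_{j}\bigr)\sum_{i}a_{i}\alpha_{i}=0$. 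Everything thus reduces to producing the $\alpha_{i}$ and the $\zeta_{j}$ as generators of one common $\U$, with the $\alpha_{i}$ realizing a Rado solution and the padded products $\alpha_{i}\prod_{j\notin F_{i}}\zeta_{j}$ staying inside $G_{\U}$.

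The ultrafilter $\U$ should be chosen to be simultaneously a $\sigma_{R}$-ultrafilter and a multiplicative idempotent. To see such a $\U$ exists, I would consider the set $\mathcal{S}$ of all $\sigma_{R}$-ultrafilters, which is nonempty because $R$ is partition regular. It is closed in $\bN$, being the intersection of the clopen sets $\overline{B^{c}}$ over all solution-free $B\subseteq\N$. Moreover it is a subsemigroup of $(\bN,\odot)$, and this is exactly where the homogeneity (scale invariance) of the linear equation enters: if $A\in\U\odot\V$ with $\U,\V\in\mathcal{S}$, then for some $x$ the set $x^{-1}A$ lies in $\V$, hence contains a solution $s_{1},\dots,s_{n}$ of $R=0$, and then $xs_{1},\dots,xs_{n}\in A$ is again a solution since $\sum a_{i}(xs_{i})=x\sum a_{i}s_{i}=0$. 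A nonempty closed subsemigroup of the compact right-topological semigroup $(\bN,\odot)$ contains an idempotent, which gives the desired $\U$. With $\U$ in hand, Theorem \ref{PBT} applied to $R$ produces $\alpha_{1},\dots,\alpha_{n}\in G_{\U}$ with $\sum a_{i}\alpha_{i}=0$.

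The remaining and most delicate point is to realize the $\zeta_{j}$ and the products $\xi_{i}=\alpha_{i}\prod_{j\notin F_{i}}\zeta_{j}$ as genuine generators of the same $\U$, and here I would exploit the iteration of the star map. Fix one generator $\theta\in{}^{*}\N\cap G_{\U}$ of height $1$ and set $\zeta_{j}=S_{k_{j}}(\theta)$ for a strictly increasing sequence $k_{1}<\dots<k_{m}$, all chosen larger than $\max_{i}h(\alpha_{i})$; by point (1) of Proposition \ref{stanfa} each $\zeta_{j}$ is a generator of $\U$. To check that each $\xi_{i}$ is also a generator, I would multiply its factors in order of increasing height, starting from $\alpha_{i}$ and appending the $\zeta_{j}$ with $j\notin F_{i}$. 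At each step the next factor $S_{k_{j}}(\theta)$ can be rewritten as $S_{t}(S_{k_{j}-t}(\theta))$, where $t$ is the height of the partial product computed so far; since $S_{k_{j}-t}(\theta)$ is again a generator of $\U$ and $\U$ is a multiplicative idempotent, point (3) of Proposition \ref{stanfa} keeps the partial product inside $G_{\U}$. The crucial feature is that the \emph{gaps} created by the indices $j\in F_{i}$, which vary with $i$, are harmlessly absorbed into the star count $k_{j}-t$ of the base generator, so the same family $\{\zeta_{j}\}$ serves simultaneously for every $i$. As all $n+m$ elements then lie in $G_{\U}$ and $P$ vanishes on them, Theorem \ref{PBT} shows that $\U$ is a $\sigma_{P}$-ultrafilter, whence $P\in\mathcal{P}$.

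I expect the main obstacle to be precisely this last coordination: a single value $\zeta_{j}$ must serve in every monomial in which $y_{j}$ occurs, and yet each padded product $\xi_{i}$ must itself remain a generator. The resolution rests on two facts I would isolate as lemmas: that a strictly increasing-height product of $\U$-generators is again a $\U$-generator when $\U$ is multiplicatively idempotent (an iteration of Proposition \ref{stanfa}(3)), and that the $\sigma_{R}$-ultrafilters form a closed $\odot$-subsemigroup. The latter is the conceptual heart, as it is what forces the use of the multiplicative, rather than additive, idempotent. Finally, the hypothesis $F_{1}\cup F_{2}\neq\emptyset$ when $n=2$ plays no role in this argument, the construction degenerating gracefully to $P=R$ otherwise; it is presumably imposed only to ensure that $P$ is a genuinely new, nonlinear polynomial.
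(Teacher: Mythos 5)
Your proof is correct, and it is essentially the argument the paper has in mind: the paper does not actually prove Theorem \ref{lev} in the text (it defers the general case to \cite{Mono}), but its proof of the special case $x+y-zw$ uses exactly your mechanism --- take a multiplicatively idempotent $\sigma_{R}$-ultrafilter $\U$ for the underlying linear polynomial $R$, pick a solution $\alpha_{1},\dots,\alpha_{n}\in G_{\U}$ of $R=0$ via the Polynomial Bridge Theorem, and pad it with products of iterated stars of a generator so that the extra factors cancel by homogeneity of $R$. Two remarks. First, your construction of the multiplicatively idempotent $\sigma_{R}$-ultrafilter (the $\sigma_{R}$-ultrafilters form a nonempty closed subsemigroup of $(\bN,\odot)$ by scale invariance of $R=0$, hence contain an idempotent by Ellis's theorem) is precisely Lemma \ref{fuffi} applied to the homogeneous polynomial $R$, so you could simply have cited that lemma; still, your derivation is the standard one and is sound. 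Second, a small bookkeeping point in the inductive step: after absorbing $\zeta_{j'}=S_{k_{j'}}(\theta)$ the partial product lies in $S_{k_{j'}+1}(\N)$, so its height $t$ can equal $k_{j}$ for the next index $j$ if the $k_{j}$ are merely strictly increasing; then $k_{j}-t=0$ and $S_{0}$ is not defined in the paper, so one should either apply Proposition \ref{stanfa}(3) with $\beta=\theta$ itself in that case or space the $k_{j}$ more generously. This is cosmetic; the rest of the height argument, the cancellation identity $\xi_{i}Q_{F_{i}}(\zeta)=\alpha_{i}\prod_{j}\zeta_{j}$, and the final appeal to Theorem \ref{PBT} all go through, and your closing observation that the hypothesis $F_{1}\cup F_{2}\neq\emptyset$ is not needed for partition regularity as defined here (it only excludes a degenerate case) is accurate.
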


E.g., by Theorem \ref{lev} it follows that the polynomial

\begin{center} $P(x_{1},x_{2},x_{3},y_{1},y_{2})=2x_{1}y_{1}+7x_{2}-2x_{3}y_{1}y_{2}$ \end{center}

is partition regular.\\
A proof and a generalization of Theorem \ref{lev} can be found in \cite{Mono}: here we want to give a direct proof of a very special case of Theorem \ref{lev}, namely that the simple nonlinear polynomial $x+y-zw$ is partition regular (we point out that the question on the partition regularity of this polynomial was asked by the authors in \cite{rif6}, and was solved by Neil Hindman in \cite{rif13}).\\
To prove that the polynomial $x+y-zw$ is partition regular we use the following lemma:

\begin{lem}\label{fuffi} If $P(x_{1},...,x_{n})$ is an homogeneous partition regular polynomial then there exists a multiplicative idempotent $\sigma_{P}-$ultrafilter. \end{lem}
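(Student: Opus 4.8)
The plan is to realize the desired ultrafilter as an idempotent of a suitable closed subsemigroup of $(\bN,\odot)$. Write $K=\{\U\in\bN\mid \U \text{ is a } \sigma_{P}\text{-ultrafilter}\}$. Since $P$ is partition regular, the family of all subsets of $\N$ that contain a solution of $P=0$ is closed under superset and is partition regular in the sense of Definition~1.1, so by Theorem~\ref{ultra} it contains an ultrafilter; hence $K\neq\emptyset$. My goal is to produce a multiplicative idempotent inside $K$, and for this I would show that $K$ is a nonempty closed subsemigroup of the compact right-topological semigroup $(\bN,\odot)$ and then invoke Ellis' theorem (every compact right-topological semigroup contains an idempotent; see, e.g., \cite{rif12}).

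The closedness of $K$ is routine: an ultrafilter $\U$ fails to be $\sigma_{P}$ exactly when it contains a solution-free set, so $\U\in K$ if and only if $\N\setminus A\in\U$ for every $A\subseteq\N$ containing no solution of $P=0$. Thus $K$ is an intersection of basic clopen subsets of $\bN$, hence closed and therefore compact.

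The heart of the argument is to show that $K$ is closed under $\odot$, and this is where homogeneity is essential. Let $\U\in K$ and let $\V\in\bN$ be arbitrary. As in the proof of Theorem~\ref{summ}, using the Polynomial Bridge Theorem~\ref{PBT} together with the $\mathfrak{c}^{+}$-enlarging property, I may choose generators $\alpha_{1},\dots,\alpha_{n}\in{}^{*}\N\cap G_{\U}$ realizing a solution, i.e.\ with $P(\alpha_{1},\dots,\alpha_{n})=0$, all of height one. Fix any generator $\beta\in{}^{*}\N\cap G_{\V}$ and set $\gamma_{i}=\alpha_{i}\cdot{}^{*}\beta$ for $i\leq n$. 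Since each $\alpha_{i}$ has height one, Proposition~\ref{stanfa}(3) gives $\gamma_{i}=\alpha_{i}\cdot S_{h(\alpha_{i})}(\beta)\in G_{\U\odot\V}$, so that $\gamma_{1},\dots,\gamma_{n}$ are generators of the single ultrafilter $\U\odot\V$. Because $P$ is homogeneous of degree $d$, scaling all the variables by the common factor ${}^{*}\beta$ yields $P(\gamma_{1},\dots,\gamma_{n})=({}^{*}\beta)^{d}\,P(\alpha_{1},\dots,\alpha_{n})=0$. By the Polynomial Bridge Theorem this shows $\U\odot\V\in K$, so in fact $K\odot\bN\subseteq K$; in particular $K$ is a subsemigroup.

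Finally, $K$ is a nonempty closed subsemigroup of $(\bN,\odot)$, so by Ellis' theorem it contains an idempotent $\U$, which is then a multiplicative idempotent $\sigma_{P}$-ultrafilter, as required. I expect the main obstacle to be precisely the closure-under-$\odot$ step: it is the only place where the hypothesis $P\in\mathcal{H}$ enters, through the fact that multiplying a hypernatural solution by a single common factor preserves the equation. This is what forces me to take the solution generators at a uniform height (namely in ${}^{*}\N$), so that the same factor ${}^{*}\beta$ appears in every $\gamma_{i}$ and the degree-$d$ homogeneity can be applied simultaneously to all coordinates.
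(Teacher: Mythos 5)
Your proof is correct. The paper itself gives no argument for this lemma --- it simply defers to \cite{Mono} --- and your route (showing that the set $K$ of $\sigma_{P}$-ultrafilters is a nonempty closed subsemigroup, in fact a right ideal, of $(\bN,\odot)$, with homogeneity entering exactly where you say it does, namely in verifying closure under $\odot$ by scaling height-one solution generators by a common factor $^{*}\beta$ and applying Theorem~\ref{PBT}, and then invoking the Ellis--Numakura idempotent lemma) is sound and is essentially the argument of the cited reference.
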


\begin{proof} This lemma is proved in \cite{Mono}. \end{proof}

\begin{prop} The polynomial $P(x,y,z,w)=x+y-zw$ is partition regular. \end{prop}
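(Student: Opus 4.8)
The plan is to combine the machinery developed in this section, specifically Lemma \ref{fuffi} and the Polynomial Bridge Theorem (Theorem \ref{PBT}), to exhibit generators of a single ultrafilter that solve $x+y-zw=0$. The key observation is that the polynomial $P(x,y,z,w)=x+y-zw$ decomposes into an additive part $x+y$ and a multiplicative part $zw$, and the value $zw$ should be realized as an idempotent-type sum so that it equals $x+y$. First I would invoke Lemma \ref{fuffi}: since the linear polynomial $x+y-t$ (or more directly, since $x+y$ is the homogeneous partition regular polynomial governing sums) is partition regular, there exists a multiplicative idempotent $\sigma$-ultrafilter $\U$ adapted to the relevant homogeneous equation. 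The crucial point is to choose $\U$ to be a multiplicative idempotent whose generators, by Proposition \ref{idultragen}, satisfy the closure property $\alpha\cdot S_{h(\alpha)}(\beta)\in G_{\U}$.

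Next I would pick a generator $\eta\in G_{\U}\cap{}^{*}\N$ and use the additive and multiplicative structure simultaneously. Because $\U$ is (multiplicatively) idempotent, by Proposition \ref{stanfa} and Proposition \ref{idultragen} the element $\eta\cdot{}^{*}\eta$ lies in $G_{\U\odot\U}=G_{\U}$. The idea is to set the product $zw$ equal to such an idempotent product, say $z=\eta$ and $w={}^{*}\eta$, so that $zw=\eta\cdot{}^{*}\eta\in G_{\U}$, and then to realize $\eta\cdot{}^{*}\eta$ as a sum $\alpha+\beta$ of two further generators of $\U$ playing the roles of $x$ and $y$. In other words, I would want to write the single hypernatural $\eta\cdot{}^{*}\eta$ as $x+y$ with $x,y\in G_{\U}$, which requires $\U$ to also be additively compatible. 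This forces the choice of $\U$ to be an ultrafilter that is simultaneously a multiplicative idempotent and supports the additive splitting; the existence of such an ultrafilter is exactly what Lemma \ref{fuffi} supplies when applied to a suitable homogeneous partition regular polynomial (concretely, one applies it so that both the sum relation $x+y=v$ and the idempotent product relation are satisfied by common generators).

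Concretely, I expect the construction to run as follows: take $\eta\in G_{\U}$, set $z=\eta$, $w={}^{*}\eta$ so that $zw=\eta\cdot{}^{*}\eta$, and then exhibit $x$ and $y$ as generators of $\U$ summing to $\eta\cdot{}^{*}\eta$ by exploiting that $\eta\cdot{}^{*}\eta$ is itself a generator of the idempotent $\U$ and hence can be split additively using the analogue of the sum formula at the appropriate height. All four numbers $x,y,z,w$ must lie in $G_{\U}$ so that a single application of Theorem \ref{PBT} yields that $\U$ is a $\sigma_{P}$-ultrafilter, whence $P$ is partition regular.

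The main obstacle will be arranging that the very same ultrafilter $\U$ simultaneously witnesses the multiplicative idempotency needed to make $zw$ a generator and the additive decomposition needed to write that generator as $x+y$ with both summands again in $G_{\U}$. This is a genuine compatibility requirement between $\oplus$ and $\odot$, and it is precisely here that Lemma \ref{fuffi} is indispensable: it guarantees a multiplicative idempotent $\sigma_{P}$-ultrafilter for the homogeneous data, and the delicate part of the argument is to feed the right homogeneous polynomial into Lemma \ref{fuffi} and then to keep careful track of the heights $h(\alpha)$ and the iterated star maps $S_{n}$ so that every one of the four chosen hypernaturals genuinely belongs to $G_{\U}$. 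Managing these heights correctly, rather than any deep new idea, is where the care is required.
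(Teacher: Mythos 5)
There is a genuine gap, and it sits exactly where you locate ``the main obstacle.'' Your plan is to first form the product $zw=\eta\cdot{}^{*}\eta\in G_{\U}$ (which is fine: multiplicative idempotency plus Proposition \ref{stanfa} does give $\eta\cdot{}^{*}\eta\in G_{\U\odot\U}=G_{\U}$) and then to \emph{decompose} that generator additively as $x+y$ with $x,y\in G_{\U}$. Nothing in the paper's machinery supports such a decomposition: Proposition \ref{idultragen} says that idempotency is equivalent to \emph{closure} of $G_{\U}$ under the operation $\alpha+S_{h(\alpha)}(\beta)$, not that an arbitrary element of $G_{\U}$ can be \emph{split} as such a sum. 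Worse, the splitting you ask for would force $\U$ to be simultaneously an additive and a multiplicative idempotent, a compatibility between $\oplus$ and $\odot$ that Lemma \ref{fuffi} does not provide (it only produces a multiplicative idempotent that is a $\sigma_{P}$-ultrafilter for a single homogeneous $P$), and you never say which homogeneous polynomial you would feed into it. Hoping that the issue reduces to bookkeeping of heights is where the argument breaks down: the needed step is not a height computation but a decomposition that is simply unavailable.

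The workable argument runs the construction in the opposite direction. Apply Lemma \ref{fuffi} to the \emph{linear} homogeneous polynomial $Q(x,y,z)=x+y-z$, which is partition regular by Theorem \ref{Rado}; this yields a multiplicative idempotent $\sigma_{Q}$-ultrafilter $\U$ and generators $\alpha,\beta,\gamma\in G_{\U}\cap{}^{*}\N$ with $\alpha+\beta-\gamma=0$. Now multiply this identity through by ${}^{*}\alpha$ to get $\alpha\cdot{}^{*}\alpha+\beta\cdot{}^{*}\alpha-\gamma\cdot{}^{*}\alpha=0$. Multiplicative idempotency is used only to keep the first two terms inside $G_{\U}$ (via Propositions \ref{stanfa} and \ref{idultragen}), while the third term is already exhibited as a product of the two generators $\gamma$ and ${}^{*}\alpha$; so $x=\alpha\cdot{}^{*}\alpha$, $y=\beta\cdot{}^{*}\alpha$, $z=\gamma$, $w={}^{*}\alpha$ all lie in $G_{\U}$ and satisfy $x+y-zw=0$, and Theorem \ref{PBT} concludes. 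No additive idempotency and no additive splitting of a generator is needed anywhere. Your proposal correctly assembles the ingredients (Lemma \ref{fuffi}, the Bridge Theorem, products of the form $\eta\cdot{}^{*}\eta$) but inverts the key step, replacing a one-line multiplication of an existing additive relation by an unprovable decomposition claim.
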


\begin{proof} Let us consider the linear polynomial $Q(x,y,z)=x+y-z$. Let $\U$ be a multiplicatively idempotent $\sigma_{Q}$-ultrafilter. Let $\alpha,\beta,\gamma\in G_{\U}\cap$$^{*}\N$ be such that $\alpha+\beta-\gamma=0$. Then $\alpha\cdot$$^{*}\alpha+\beta\cdot$$^{*}\alpha-\gamma\cdot$$^{*}\alpha=0$. If we pose $\xi_{1}=\alpha\cdot$$^{*}\alpha$, $\xi_{2}=\beta\cdot$$^{*}\alpha$, $\xi_{3}=\alpha$ and $\xi_{4}=$$^{*}\alpha$ we have that $\xi_{1},\xi_{2},\xi_{3}$ and $\xi_{4}$ are in $G_{\U}$ and $P(\xi_{1},\xi_{2},\xi_{3}\xi_{4})=0$. So we can apply the Polynomial Bridge Theorem and we can conclude.\end{proof}

\section{Conclusions}

We conclude the paper by pointing out two unsolved questions that arise from the examples that we gave in section 3.3. The first question that we find really interesting is the following: is there a characterization of $\mathcal{P}$ that, given a polynomial $P(x_{1},...,x_{n})$, allows to decide in a finite time if $P(x_{1},...,x_{n})\in\mathcal{P}$ or not?\\
Since we think that the previous question should be really challenging, we pose one other question, that we hope to be simpler to answer: is there a characterization of $\mathcal{H}$ that, given a polynomial $P(x_{1},...,x_{n})$, allows to decide in a finite time if $P(x_{1},...,x_{n})\in\mathcal{H}$ or not?\\
We think that this new question should be easier to answer thanks to Lemma \ref{fuffi}, although it will still probably be really difficult to find such a characterization.

\end{document}